\numberwithin{equation}{section}
\newtheorem{theorem}{Theorem}[section]
\newtheorem{remark}[theorem]{Remark}
\newtheorem{lemma}[theorem]{Lemma}
\newtheorem{proof}{Proof}
\theoremstyle{nonumberplain}
\renewcommand{\epsilon}{\ensuremath\varepsilon}
\renewcommand{\phi}{\ensuremath{\varphi}}
\newcommand\mathmidscript[1]{\vcenter{\hbox{$\scriptstyle #1$}}}
\newcommand{\pvint}{\ensuremath{\;\mathmidscript{\raisebox{-0.9ex}{\tiny{p.\,v.\;}}}\!\!\!\!\!\!\!\!\int}}
\DeclareMathAlphabet{\mathpzc}{OT1}{pzc}{m}{it}
\newcommand*\Laplace{\mathop{}\!\mathbin\bigtriangleup}
\newcommand{\DEF}{\coloneqq}
\newcommand{\RR}{\mathbb{R}}
\newcommand{\CC}{\mathbb{C}}
\newcommand{\NN}{\mathbb{N}}
\newcommand{\Om}{\Omega}
\newcommand{\del}{\partial}
\newcommand{\MID}{\!\! \mid\!}
\newcommand{\cC}{\mathcal{C}}
\newcommand{\OO}{\mathcal{O}}
\newcommand{\Reu}{\mathrm{R}}
\newcommand{\intd}{\mathrm{d}}
\newcommand{\TransT}{\mathrm{T}}
\def\moverlay{\mathpalette\mov@rlay}
\def\mov@rlay#1#2{\leavevmode\vtop{%
   \baselineskip\z@skip \lineskiplimit-\maxdimen
   \ialign{\hfil$\m@th#1##$\hfil\cr#2\crcr}}}
\newcommand{\charfusion}[3][\mathord]{
    #1{\ifx#1\mathop\vphantom{#2}\fi
        \mathpalette\mov@rlay{#2\cr#3}
      }
    \ifx#1\mathop\expandafter\displaylimits\fi}
\newcommand{\Gk}{\Gamma^k}
\newcommand*\imagi{\mathrm{i}\,}
\newcommand{\compactvec}[1]{\ensuremath
    \big(\begin{smallmatrix}#1\end{smallmatrix}\big)%
}
\newcommand{\Omr}{{\Omega}_r}
\newcommand{\OmR}{{\Omega}_R}
\newcommand{\NOk}{\mathrm{N}_{\Omega}^k}
\newcommand{\NOrk}{\mathrm{N}_{\Omr}^k}
\newcommand{\NORk}{\mathrm{N}_{\OmR}^k}
\newcommand{\ROk}{\Reu_{\Omega}^k}
\newcommand{\Hcirc}{\mathring{\mathcal{H}}}
\title{Optimal design of optical analog solvers of linear systems}
\date{}
\author{Kthim Imeri\thanks{\footnotesize Department of Mathematics, ETH Z\"urich, R\"amistrasse 101, CH-8092 Z\"urich, Switzerland (kthim.imeri@sam.math.ethz.ch).} 
}
\begin{document}
	\maketitle

\begin{abstract}
In this paper, given a linear system of equations $\mathbf{A}\, \mathbf{x}= \mathbf{b}$, we are finding locations in the plane to place objects such that sending waves from the source points and gathering them at the receiving points solves that linear system of equations. The ultimate goal is to have a fast physical method for solving linear systems. The issue discussed in this paper is to apply a fast and accurate algorithm to find the optimal locations of
 the scattering objects. We tackle this issue by using asymptotic expansions for the solution of the underlying partial differential equation. This also yields a potentially faster algorithm than the classical BEM for finding solutions to the Helmholtz equation. 
\end{abstract}

\def\keywords2{\vspace{.5em}{\textbf{Mathematics Subject Classification
(MSC2000).}~\,\relax}}
\def\endkeywords2{\par}
\keywords2{35C20, 78A46}

\def\keywords{\vspace{.5em}{\textbf{ Keywords.}~\,\relax}}
\def\endkeywords{\par}
\keywords{optical solver of linear systems, scattering of waves, Neumann functions}

\section{Introduction}\label{Ch:Introduction}

In physical problems such as reflection of light in a three dimensional environment, aircraft simulations, or image recognition, we 
are searching for methods to numerically solve linear systems of equations of the form $\mathbf{A}\, \mathbf{x}= \mathbf{b}$. Recently, optical analog computing has been introduced as an alternative paradigm to classical computational linear algebra in order to contribute to computing technology.  In  \cite{EnghetaPaper}, 
the authors design a two dimensional structure with a physical property, which allows for solving a predetermined linear system of equations. In general, structures with such favourable physical properties are called meta-structures or meta-surfaces and are under very active research \cite{ourGradSurfpaper,HRSuperLens, Lin298}. To be precise in their set-up, sending specific waves across the meta-structure modifies those waves, such that they represent the solution $\mathbf{x}$ to the problem. 

One issue with this method is to quickly find the accurate structure for a given matrix $\mathbf{A}$. In \cite{EnghetaPaper} the authors use physics software to gradually form such a structure. Here we demonstrate another method, which relies on using asymptotic expansions of solutions to partial differential equation. Such expansions have already been studied in different papers \cite{ourzarembapaper, oursteklovpaper, FWMSP1, FWMSP2}. With that tool we can position and scale objects, on which the waves scatter, such that the resulting structure satisfies the desired requirements.

In the process of developing these asymptotic formulas, we have realized that we can compute the scattered wave using a method which is similar to the explicit Euler scheme. There we can numerically compute the solution to an ordinary differential equation by successively progressing in time with small time-steps until we reach the desired time. With our method we solve the partial differential equation around an obstacle by progressing the object-radius with small radius increments until we reach the full extent. We present the numerical application of that method on a circular domain.

This paper is organized as follows. In Section \ref{sec:Prelim} we model the mathematical foundation for the underlying physical problem and define the fundamental partial differential equations for the asymptotic expansions. This leads us to the definition for the Neumann function on the outside. We then explain the connection of the Neumann function and the linear system of equations. In Section \ref{Sec:AsymptoticFormula} we prove the asymptotic formulas concerning the Neumann function. There we discover special singular behaviours, which are essential to prove the asymptotic expansions. In Section \ref{sec:NumImplTest}, we first show the method to solve for the wave by increasing the radius by small steps and discuss the numerical error. Afterwards, we explain how we numerically build the meta-structure to solve the linear system of equations and discuss how well it operates. In Section \ref{sec:ConcludingRemarks}, we conclude the paper with final considerations, open questions and possible future research directions. In the appendix we provide an interesting proof of a technical result and a modification of the trapezoidal rule, when we apply a logarithmic singularity.

\section{Preliminaries}\label{sec:Prelim}
Let $k\in (0,\infty)$ and let $\Omega$ be a finite union of disjoint, non-touching, simply connected, bounded and open $\cC^2$-domains in $\RR^2_+ = \{ x\in \RR^2 \mid x_2>0\}$. Let $(z_j)_{j=1}^N$ be $N\in\NN$ source points on the horizontal axis $\Lambda = \{x\in \RR^2 \mid x_1\in (0,1) \text{ and } x_2=0\}$.  We have $N$ functions $u_j: \RR^2\rightarrow \CC$, for $j=1, \ldots, N$, which solve the following partial differential equation:
\begin{align}\label{PDE:uj}
	\left\{ 
	\begin{aligned}
		 \left( \Laplace + k^2  \right) u_j(x) &= I_j \,\delta_{z_j}(x) \quad &&\text{in } \RR^2_+\setminus \overline{\Om}\,, \\
		 \del_{\nu_x} u_j(x) \MID_+\! &= 0 \quad &&\text{on } \del \Omega \,,\\
		 u_j(x) &= 0 \quad &&\text{on } \del\RR^2_+\setminus \Lambda \,,\\
		 \del_{x_2} \, u_j(x) &= 0 \quad &&\text{on } \Lambda \,,\\
		 \Big({\frac{\del }{\del |x|}}-\imagi k\Big)\,u_j(x)&\rightarrow 0 \quad &&\text{for } |x|\rightarrow \infty\,,
	\end{aligned}
	\right.
\end{align}
\begin{wrapfigure}{r}{0.5\textwidth}
  \begin{center}
    \includegraphics[width=0.48\textwidth]{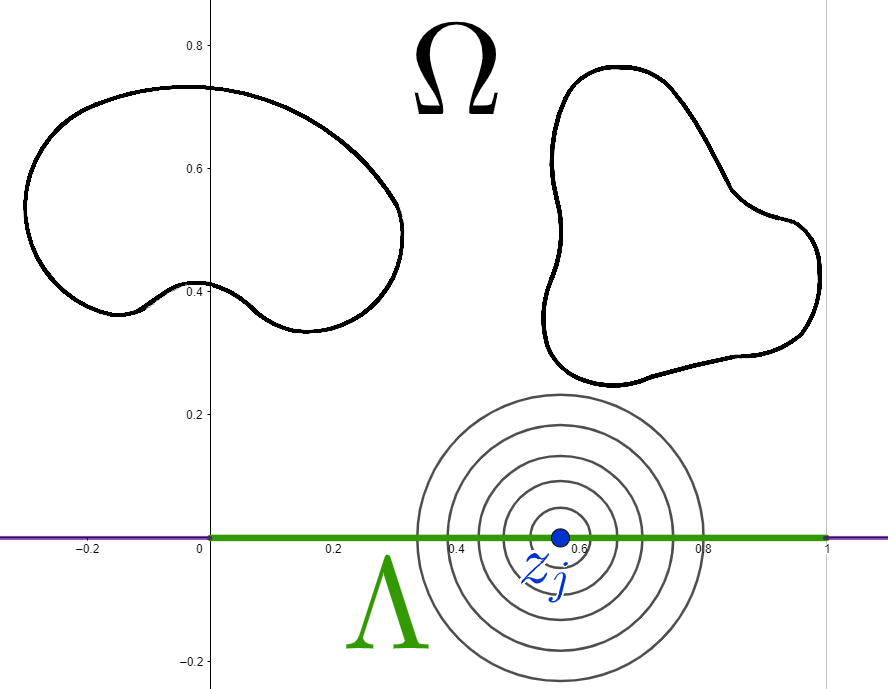}
  \end{center}
  \caption{This is the geometric set-up. In black we have the domain $\Omega$, in green the line segment $\Lambda$ and in blue the source point $z_j$. In violet, the absorbing layer is depicted.}
\end{wrapfigure}
where $\cdot\MID_+\!$ denotes the limit from the outside of $\Omega$ to $\del \Om$, and $\del_{\nu_x}$ denotes the outside normal derivative on $\del \Omega$. $\delta_{z_j}$ denotes the Dirac delta function at point $z_j$ and $I_j\in\CC$ denotes the intensity at the source $z_j$. The first condition is the Helmholtz equation which represents the time-independent wave equation, and arises from the wave equation using the Fourier transform in time on the wave equation. The second condition is known as the Neumann condition and models a material with a high electrical impedance. The third and fourth conditions model an absorbing layer on $\del\RR^2_+\setminus \Lambda$ and a Neumann condition on $\Lambda$. The fifth condition is known as the Sommerfeld radiation condition, and originates from a physical constraint for the behaviour of an outgoing wave.

We define $\Gamma^k$ to be the fundamental solution to the Helmholtz equation, that is $\Gamma^k$ solves PDE (\ref{PDE:uj}) without the Neumann boundary condition and a source at the origin. Furthermore we define $\Gamma^k(z,x) = \Gamma^k(z -x)$ for $z,x \in \RR^2$, $z\neq x$.  Then we define the Neumann function $\NOk$ to be the solution to
\begin{align}\label{PDE:NO}
	\left\{ 
	\begin{aligned}
		 \left( \Laplace + k^2  \right) \NOk(z,x) &= \delta_z(x) \quad &&\text{in } \RR^2\setminus \overline{\Om}\,, \\
		 \del_{\nu_x} \NOk(z,x) \MID_+\! &= 0 \quad &&\text{on } \del \Omega \,,\\
		 \Big({\frac{\partial }{\partial |x|}}-\imagi k\Big)\,\NOk(z,x)&\rightarrow 0 \quad &&\text{for } |x|\rightarrow \infty\,.
	\end{aligned}
	\right.
\end{align}
In contrast with $u_j$, $\NOk$ is not only defined on the upper half. We recall that $\NOk(z,x) = \NOk(x,z)$, which we can readily see using a Green's identity. We can express $\NOk$ as a sum of $\Gamma^k$ and a smooth remainder, which satisfies PDE (\ref{PDE:NO}) with a vanishing right-hand side in the first equation. The same holds true for $u_j$.

Using Green's identity on the convolution of $u_j$ with $\left( \Laplace + k^2  \right) \NOk$, we can infer for $i\neq j$ that
\begin{align*}
	\tfrac{1}{2} u_j(z_i) =& \int_{\RR^2_+\setminus \overline{\Om}} u_j(x)\left( \Laplace + k^2  \right) \NOk(z_i,x)\intd x \,,\\
		=&\,  I_j \, \NOk(z_j, z_i) -  \int_{\Lambda} u_j(x)\,\del_{x_2} \NOk(z_i,x) \intd\sigma_x \,.
\end{align*}
Using the trapezoidal rule we can approximate the integral in the last equation up to an error in $\OO(N^{-1})$. Here we note that $u_j$ and $\NOk$ have a logarithmic singularity at $z_j$, hence $u_j(z_j)$ is not well defined. Thus we use a slight modification in the trapezoidal rule, which is elaborated in Appendix \ref{app:A}. After such modification, we define the complex column vector $\mathbf{u}_j=(u_j(z_i))_{i=1}^N$, the complex column vector $\mathbf{N}_j=(\NOk(z_j,z_i))_{i=1}^N$ and the $N\times N$ complex matrix $\mathbf{S}=(\del_{x_2} \NOk(z_i,z_k))_{\substack{i=1,\ldots, N \\ k=1,\ldots, N}}$. Then we have that
\begin{align*}
	(\tfrac{1}{2}\mathbf{I}_N + \tfrac{2}{N+1}\mathbf{S})\,\mathbf{u}_j = I_j\,\mathbf{N}_j+\OO(N^{-1})\,,
\end{align*}
where $\mathbf{I}_N$ denotes the $N\times N$ identity matrix. 

Our objective is to solve a linear system of equation $\mathbf{A}\,\mathbf{x}\, = \, \mathbf{b}$ using a physical procedure, in which an electrical signal is applied at $z_j$, for every $j=1,\ldots, N$, and then is measured again at those points, for $\mathbf{A}\in \CC^{N\times N}$ and $\mathbf{x}, \mathbf{b} \in \CC^{N}$, where $\mathbf{A}$ and $\mathbf{b}$ are given. The scattered wave, originated at $z_j$, is in its Fourier space the function $u_j$.  Thus we are especially looking for a domain $\Omega$, which yields
\begin{align}\label{equ:S = N(A-I)}
	\mathbf{S} =  \tfrac{N+1}{2}\big(\mathbf{A} - \tfrac{1}{2}\mathbf{I}_N \big)\,,
\end{align}
and in searching so we keep track of the vector $\mathbf{N}_j$ with the intention of rapidly determining the intensities $I_s$ such that $\sum_{j=1}^N I_j\,\mathbf{N}_j = \mathbf{b}$. In this paper we primarily consider rapidly finding the domain $\Omega$ such that Equation (\ref{equ:S = N(A-I)}) holds.

\section{Asymptotic Formula for the Perturbation of the Neumann Function $\NOk$} \label{Sec:AsymptoticFormula}
Let $\Omega$ be a finite union of disjoint, non-touching, simply connected, bounded and open $\cC^2$-domains in $\RR^2$. Let $k\in (0,\infty)$, then for $z\not\in \overline{\Om}$, where $\overline{\Om}$ is the topological closure of the open set $\Omega$, we define the outside Neumann function $\NOk(z,x)$, for $x\not\in \overline{\Om}$, as the solution to the partial differential equation (\ref{PDE:NO}).
Let $B_r(\zeta)$ be a ball centred at $\zeta\in\RR^2$ with radius $r>0$. We define $\Omr$ as the union of some set $\Omega$ as defined above and of the ball $B_r(\zeta)$, where $\overline{B_r(\zeta)}$ does not intersect $\overline{\Omega}$. Let $\NOrk$ and $\NOk$ be the outside Neumann function to $\Omr$ and $\Om$, respectively.

\begin{theorem}\label{thm:NOrk-Asymptotics}
	For $k r$ small enough and for all $z,\,x \not\in \overline{\Omr}$ , $x\neq z$, we have that
	{\small
	\begin{align}\label{equ1:thm:NOrk-Asymptotics}
		\NOrk(z,x) =& \;\NOk(z,x)+\pi\,r^2\,\big(\mkern1mu k^2\,\NOk(z,\zeta)\,\NOk(x,\zeta)-2\,\nabla \NOk(z,\zeta)\cdot \nabla \NOk(x,\zeta)\big)\nonumber\\
		&+\OO(r^3\log(r))\,,
	\end{align}
	}where $\nabla$ denotes the gradient to the second input in $\NOk$ , '$\cdot$' denotes the dot-product  and $\OO(\cdot)$ denotes the limiting behaviour for $r\rightarrow 0$. Additionally, for $y\in \del(\Omega_r\setminus \Omega)$, we have that 
	{\small
	\begin{align}\label{equ2:thm:NOrk-Asymptotics}
		\NOrk(z,y) = 
			&\,\NOk(z,\zeta)+2\,(y-\zeta) \cdot \nabla \NOk(z,\zeta)+(y-\zeta)^\TransT\, (\nabla\nabla^\TransT) \NOk(z,\zeta)\, (y-\zeta)\nonumber\\
			&- r^2k^2\frac{1}{2}\, \NOk(z,\zeta)\,(\imagi \frac{\pi}{2}-\gamma-0.5-\log\big(\frac{k\, r}{2}\big)-2\pi\,\ROk(\zeta,\zeta))\nonumber\\
			&-r^2\, 2\pi \,\nabla_w\ROk(\zeta,w)\mid_{w=\zeta}\cdot\nabla \NOk(z,\zeta)+\OO(r^3\log(r))\,,
	\end{align}
	}where $(\nabla\nabla^\TransT)$ denotes the Hessian matrix, $\gamma\approx 0.57721$ denotes the Euler–Mascheroni constant, and  $\ROk(z,w)\DEF \NOk(z,w)-\Gamma^k(z,w)$ has a removable singularity at $w = z$.\\
	For $k\cdot r>0$ small enough and for all $y,\,w \in \del{\Omr}$ , $y\neq w$, we have that
	{\small
	\begin{align}\label{equ3:thm:NOrk-Asymptotics}
		\NOrk(y,w) = 
			&\, \frac{\log(1-\cos(\theta_y\!-\! \theta_w))}{2\pi}+\frac{2\log(kr)+2\gamma-\imagi \pi}{4\pi}+\ROk(\zeta,\zeta)+\OO(r\log(r))\,.
	\end{align}
	}
\end{theorem}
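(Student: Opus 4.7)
The plan is to represent the perturbation $v(z,x) \DEF \NOrk(z,x) - \NOk(z,x)$ as a single-layer potential supported on the new circular piece of boundary $\del B_r(\zeta)$, using the unperturbed Neumann function as the kernel:
\begin{align*}
v(z,x) = \int_{\del B_r(\zeta)} \NOk(y,x)\,\phi_z(y)\,\intd\sigma_y.
\end{align*}
Because $\NOk(y,\cdot)$ solves the Helmholtz equation, satisfies the Sommerfeld radiation condition, and fulfils the homogeneous Neumann condition on $\del\Omega$ for every $y\not\in\overline{\Om}$, the same is automatically true for $v(z,\cdot)$ on $\RR^2\setminus\overline{\Omr}$. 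Matching the remaining condition $\del_{\nu_x} v(z,x)|_+ = -\del_{\nu_x}\NOk(z,x)$ on $\del B_r(\zeta)$ via the standard single-layer jump relation produces a boundary integral equation of the form $(-\tfrac{1}{2}I + \KK^{*})[\phi_z] = -\del_\nu\NOk(z,\cdot)$ on $\del B_r(\zeta)$, where $\KK^{*}$ is the Neumann--Poincar\'e-type operator with kernel $\NOk$ on the small circle.

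Next, I would exploit the splitting $\NOk = \Gamma^k + \ROk$ with $\ROk$ smooth near $\zeta$. The operator $\KK^{*}$ accordingly decomposes into the classical Helmholtz part on the disc, whose small-$r$ asymptotics are fully explicit via the series for $\Hank{0}$, plus a smooth perturbation in which only $\ROk(\zeta,\zeta)$ and $\nabla_w\ROk(\zeta,w)|_{w=\zeta}$ contribute at the order we care about. Taylor-expanding the right-hand side gives $\del_\nu\NOk(z,\zeta+r\hat\nu) = \hat\nu\cdot\nabla\NOk(z,\zeta) + \OO(r)$, and inverting the boundary integral equation order by order in $r$ produces $\phi_z$ as a dipole term of order $1$ proportional to $\hat\nu\cdot\nabla\NOk(z,\zeta)$, plus a monopole term of order $r$ proportional to $k^2\NOk(z,\zeta)$, plus higher-order corrections.

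Substituting this density back into the representation and Taylor-expanding $\NOk(y,x)$ around $y=\zeta$ for $x$ away from $\zeta$ yields Equation \eqref{equ1:thm:NOrk-Asymptotics}: the $k^2\NOk(z,\zeta)\NOk(x,\zeta)$ contribution comes from the monopole piece, while the $-2\,\nabla\NOk(z,\zeta)\cdot\nabla\NOk(x,\zeta)$ contribution comes from the dipole piece, the factor $-2$ being (per unit area) the trace of the polarization tensor of a disc in $\RR^2$. For Equation \eqref{equ2:thm:NOrk-Asymptotics} I would instead evaluate $\NOrk(z,\cdot)$ on $\del B_r(\zeta)$ itself: the interior trace of the layer potential brings in the small-argument expansion of $\Hank{0}$, which produces the $\log(k r/2)$, $\gamma$ and $\imagi\pi/2$ contributions, while the regular part of $\NOk$ supplies the $\ROk(\zeta,\zeta)$ and $\nabla_w\ROk(\zeta,w)|_{w=\zeta}$ terms. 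Equation \eqref{equ3:thm:NOrk-Asymptotics} follows by an analogous computation with both arguments on $\del B_r(\zeta)$: the singular part of $\Gamma^k(y,w)$ reduces, via $|y-w|^2 = 2r^2(1-\cos(\theta_y-\theta_w))$, to $\tfrac{1}{2\pi}\log(1-\cos(\theta_y-\theta_w))$, and the leftover constant pieces assemble into $\tfrac{1}{4\pi}(2\log(kr)+2\gamma-\imagi\pi) + \ROk(\zeta,\zeta)$.

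The main obstacle will be the careful bookkeeping of the logarithmic and $\gamma$-terms and the verification of the precise numerical coefficients---in particular the combination $\imagi\tfrac{\pi}{2} - \gamma - \tfrac{1}{2} - \log(\tfrac{k r}{2}) - 2\pi\ROk(\zeta,\zeta)$ in \eqref{equ2:thm:NOrk-Asymptotics}. Pinning these down requires combining the first two orders of the Hankel expansion $\Hank{0}(\tau) = 1 + \tfrac{2\imagi}{\pi}(\log(\tau/2)+\gamma) + \OO(\tau^2\log\tau)$ with the explicit single-layer trace on a disc, while simultaneously tracking which contributions come from the monopole versus the dipole piece of $\phi_z$ and which are absorbed into the uniform $\OO(r^3\log r)$ remainder. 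To secure the remainder estimate one must iterate the inversion of the boundary integral equation one further order and verify that no term of size $r^2\log r$ or larger has been overlooked.
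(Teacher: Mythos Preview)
Your approach is correct but genuinely different from the paper's. You set up a \emph{single-layer} representation $v=\int_{\del B_r(\zeta)}\NOk(y,\cdot)\,\phi_z(y)\,\intd\sigma_y$ for the perturbation, obtain a second-kind equation $(-\tfrac12 I+\KK^*)[\phi_z]=-\del_\nu\NOk(z,\cdot)$, and invert it asymptotically by the standard small-inclusion / polarization-tensor machinery (dipole $\sim\hat\nu\cdot\nabla\NOk(z,\zeta)$, monopole $\sim k^2\NOk(z,\zeta)$). The paper instead starts from the Green-identity representation
\[
\NOrk(z,x)=\NOk(z,x)-\int_{\del B_r(\zeta)}\del_{\nu_y}\NOk(x,y)\,\NOrk(z,y)\,\intd\sigma_y,
\]
so the unknown is the boundary trace $\NOrk(z,\cdot)$ itself rather than an auxiliary density. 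Taking the exterior normal derivative in $x$ on $\del B_r(\zeta)$ produces a \emph{hypersingular} equation, which the paper reduces---after expanding $\Hank{0}$, $\Hank{1}$---to the circular Hilbert transform
\[
\Hcirc[\phi](\tau)=\frac{1}{2\pi}\pvint_0^{2\pi}\phi(t)\cot\!\Big(\frac{t-\tau}{2}\Big)\intd t,
\]
and inverts via $\Hcirc^{-1}=-\Hcirc$ on mean-zero functions. This yields directly $\NOrk(z,y(t))=C+2r\,(\cos t,\sin t)\cdot\nabla\NOk(z,\zeta)+\OO(r^2\log r)$; the constant $C$ is then pinned down by re-inserting into the jump relation, and the three formulas follow.

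What each route buys: your second-kind formulation is closer to the Ammari--Kang style literature, makes the appearance of the disc polarization tensor $2\pi r^2 I$ transparent, and avoids the delicate limit-and-integration-by-parts manipulation the paper performs on the hypersingular kernel. The paper's route, on the other hand, delivers the boundary trace $\NOrk(z,y)$ in one stroke (no separate density to substitute back), and the Hilbert-transform inversion is exact on the circle, so the constants in \eqref{equ2:thm:NOrk-Asymptotics} and \eqref{equ3:thm:NOrk-Asymptotics} drop out of a single self-consistency step rather than from matching two orders of a Neumann series. Your warning about bookkeeping the $\gamma$, $\log(kr/2)$ and $\ROk(\zeta,\zeta)$ constants is well taken for either method; in your framework the monopole amplitude is fixed only at the next order (since $-\tfrac12 I+\KK^*_0$ annihilates constants on a circle), which is precisely where those constants enter.
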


\begin{proof}
	Using Green's identity and the PDE (\ref{PDE:NO}) we readily see that
	\begin{align}\label{proof:Asymp:equ:NOrk=NOk+int}
		\NOrk(z,x) 
			&= \int_{\RR^2\setminus \Omr} \left( \Laplace + k^2  \right)\NOk(x,y)\;\NOrk(z,y)\intd y\nonumber\\
			&= \NOk(x,z) - \int_{\del B_r(\zeta)}\del_{\nu_y} \NOk(x,y) \;\NOrk(z,y)\intd \sigma_y\,,
	\end{align}
	where the normal vector still points outwards. Using an analogous argument by integrating $\NOk$ with itself, we see that $\NOk(x,z)=\NOk(z,x)$. We let $x$ go to $\del B_r(\zeta)$ and apply the gradient on both sides and apply the normal at $y$. Then we obtain the equation
	\begin{align}\label{proof:Asymp:equ:-dN=-d_Int_dN N}
		-\del_{\nu_y} \NOk(z,y) = - \del_{\nu_y} \int_{\del B_r(\zeta)}\del_{\nu_w} \NOk(y,w) \;\NOrk(z,w)\intd \sigma_w\,,
	\end{align}
	where $y$ as well as $w$ are elements of $\del B_r(\zeta)$. 
	We remark here that we cannot pull the normal derivative inside the integral. 
	Let us consider next the decomposition $\NOk(y,w)=\Gk(y,w)+\ROk(y,w)$, where $\Gk(y,w)$ is the fundamental solution to the Helmholtz equation, that means that $( \Laplace_w + k^2  ) \Gk(y,w) = \delta_y(w)$, and $\ROk(y,w)$ is the remaining part of the PDE (\ref{PDE:NO}). $\Gk$ can be expressed through $\Gk(y,w)=-\frac{\imagi}{4} H_0^{(1)}(k|y-w|)$, where $H_0^{(1)}$ is the Hankel function of first kind of order zero and $\ROk$ is smooth \cite{OutSideNeumannExistence}.  
	From the decomposition in Equation (\ref{proof:Asymp:equ:-dN=-d_Int_dN N}) to arrive at
	\begin{align*}
		\del_{\nu_y} \NOk(z,y) = \del_{\nu_y} \int_{\del B_r(\zeta)}\del_{\nu_w} \Gk(y,w) \;\NOrk(z,w)\intd \sigma_w + \OO(r)\,,
	\end{align*}
	by using the fact that the integral over $\del B_r(\zeta)$ decays linearly for $r\rightarrow 0$.  Transforming the normal derivative in the integral using polar coordinates, where we use that we have an integral over the boundary of a circle, we can infer that
	{\footnotesize
	\begin{align*}
	\del_{\nu_y} \NOk(z,y(\tau)) &
		= \lim_{h\searrow 0} 
		\int_{0}^{2\pi}
			\frac{\imagi k\,r}{4}
			\Big[
			\frac{k(h+r(1-\cos(\Delta))(-h\cos(\Delta)+r(1-\cos(\Delta)))H_0^{(1)}(k|\cdot|)}{|\cdot|^2}\nonumber\\
		 		+
		 		&\frac{(-2r(h+r)+(h^2+2hr+2r^2)\cos(\Delta))H_1^{(1)}(k|\cdot|)}{|\cdot|^3}
		 	\Big] \,
		 	\NOrk(z,w(t))
		\,\intd t
		+ \OO(r)\,,
	\end{align*}
	}
	where $\Delta\DEF t-\tau$, where $y(\tau)=\zeta+r\compactvec{\cos(\tau)\\ \sin(\tau)}$ and $w(t)=\zeta+r\compactvec{\cos(t)\\ \sin(t)}$ and $|\cdot|\DEF\sqrt{h^2+(1-\cos(\Delta))(2h r +2 r^2)}$. 
	
	With the Taylor series for the Hankel function $H_0^{(1)}$ and Hankel function $H_1^{(1)}$, for $k\, r$ small enough, and considering the asymptotic behaviour of the forthcoming terms and applying some trigonometric identities we readily see that
	{\footnotesize
	\begin{align*}
	\del_{\nu_y}& \NOk(z,y(\tau)) 
		=\lim_{h\searrow 0} 
		\frac{r}{2\pi}
		\int_{0}^{2\pi}
			\frac{h^2 -2\sin(\frac{\Delta}{2})^2\,(h^2\!+\!2hr\!+\!2r^2)}{(h^2+2\sin(\frac{\Delta}{2})^2\,(2hr\!+\!2r^2))^2}
		 	\NOrk(z,w(t))
		 	\intd t
		 	+
		 	\OO(r\,\log(r))\,.
	\end{align*}
	}
	Using integration by parts, where we consider that $\NOrk(z,w(\cdot))$ is a periodic function, we obtain that
	\begin{align*}
	\del_{\nu_y}& \NOk(z,y(\tau)) 
		=\lim_{h\searrow 0} 
		\frac{-1}{4\pi\, r}
		\int_{0}^{2\pi}
			\frac{\sin(t-\tau)\,\del_t \NOrk(z,y(t))}{2\sin\big(\frac{t-\tau}{2}\big)^2+h}
		\,\intd t
		+\OO(r\,\log(r))\,.
	\end{align*}
	Before we can proceed, we have to study a linear operator we call $\Hcirc$, which takes a $2\pi$ periodic $\cC^2$ function $\phi$ and maps it to 
	\begin{align*}
		\Hcirc[\phi](\tau)
		\DEF
			\frac{1}{2\pi}\lim_{h\searrow 0} 
			\int_{0}^{2\pi}
				\frac{\sin(t-\tau)}{2\sin\big(\frac{t-\tau}{2}\big)^2+h}
		 		\,\phi(t)
			\,\intd t.
	\end{align*}
	We can readily show that 
	$$
	\Hcirc[\phi](\tau)=\frac{1}{2\pi}\pvint_{0}^{2\pi}\phi(t)\cot\Big(\frac{t-\tau}{2}\Big)\intd t\,,
	$$
	where 'p.v.' stands for the 'principle value'. This equation follows by integration by parts on both sides of the equation, and by using the integrability of the logarithm function. Now, we can state that $\Hcirc$ is an invertible operator up to a constraint, according to \cite[§ 28]{Muskhelishvili}, and that the solution to $\Hcirc[\phi]=\psi$ is given through $\phi=-\Hcirc[\psi]$, where the constraint is that $\int_0^{2\pi}\phi=0$. 
	Then we can infer that 
	\begin{align*}
		\Hcirc[\del_{\nu_y} \NOk(z,y(\cdot))](t)
		=	\frac{1}{2\,r}\,\del_t \NOrk(z,y(t))+ \OO(r\log(r))\,,
	\end{align*}
	where we used that $\Hcirc[\OO(r\log(r))]=\OO(r\log(r))$.
	Thus it follows that
	\begin{align*}
		\NOrk(z,y(t))
		=	C+2\,r\,\int\Hcirc[\del_{\nu_y} \NOk(z,y(\cdot))]
			+ \OO(r^2\log(r))\, ,
	\end{align*}
	for a constant function $C$ in $t$.
	Next, we approximate the known function $\NOk$ through
	\begin{align*}
		\del_{\nu_y} \NOk(z,y(t)) 
			= \compactvec{\cos(t)\\ \sin(t)} \cdot \nabla_y \NOk(z,y)|_{y=\zeta}+\OO(r)\,.
	\end{align*}
	Using that $\Hcirc[\sin(\cdot)]=\cos(\cdot)$ , we see that
	\begin{align}\label{proof:Asymp:equ:NOrk=C+2rdN+O}
		\NOrk(z,y(t))
		=	C+2\, r\,\compactvec{\cos(t)\\ \sin(t)} \cdot \nabla \NOk(z,\zeta)
			+ \OO(r^2\log(r))\,.
	\end{align}
	Analogously to Equation (\ref{proof:Asymp:equ:NOrk=NOk+int}), we can formulate the statement that
	\begin{align}\label{proof:Asymp:equ:NOrk=NOk+2int}
		\NOrk(z,y) 
			&= 2\,\NOk(z,y) - 2\int_{\del B_r(\zeta)}\del_{\nu_w} \NOk(y,w) \;\NOrk(z,w)\intd \sigma_w\,,
	\end{align}
	where $z\not\in \overline{\Omega}$ and $y\in \del B_r(\zeta)$, and where we use that the Dirac measure located at $y$, which is at the boundary of the integration domain, which is a $\cC^2$ boundary, yields only half of the evaluation of the integrand at $y$.
	 We then apply Equation (\ref{proof:Asymp:equ:NOrk=C+2rdN+O}) to the last equation and see that $C=\NOk(z,\zeta)$. Applying it again for the second order term, while using Taylor expansions and comparing coefficients of the same order in $r$, we readily obtain the second equation in Theorem \ref{thm:NOrk-Asymptotics}. For the first equation in Theorem \ref{thm:NOrk-Asymptotics}, we apply the formula for $\NOrk(z,y(t))$, and the Taylor expansion up to second order for $\del_{\nu_y} \NOk(z,y(t))$ to Equation (\ref{proof:Asymp:equ:NOrk=NOk+int}) to obtain
	 {\small
	 \begin{align*}
	 	&\NOrk(z,x) = \NOk(z,x)- 2\,r\int_0^{2\pi}r\,\del_\nu \NOk(x,\zeta)\,\compactvec{\cos(t)\\ \sin(t)}\cdot\nabla\NOk(z,\zeta)\intd t+\OO(r^3\log(r))\\
	 				&- \NOk(z,\zeta)\int_0^{2\pi}r\,
	 					\Big( 
	 						\compactvec{\cos(t)\\ \sin(t)}\cdot \nabla \NOk(x,\zeta)
	 						+ r \,\compactvec{\cos(t)\\ \sin(t)}^\TransT \, (\nabla\nabla^\TransT) \NOk(x,\zeta)\,\compactvec{\cos(t)\\ \sin(t)}
	 					\Big)\intd t\,,
	 \end{align*}
	 }where $(\nabla\nabla^\TransT)$ denotes the Hessian matrix which emerges from the Taylor expansion. We evaluate the two integrals explicitly, use that $\Laplace \NOk = -k^2\NOk $ and obtain Equation (\ref{equ1:thm:NOrk-Asymptotics}).
	 For Equation (\ref{equ3:thm:NOrk-Asymptotics}) we use Green's identity and obtain
	 {\small
	 \begin{align*}
	 	\NOrk(y,w) &= 2\NOk(y,w)-2 \int_{\del\Om_r}\del_{\nu_u} \NOk(w,u)\NOrk(y,u)\intd \sigma_u\\
	 	&= \tfrac{1}{2\pi}\log(1-\cos(\theta_y-\theta_w))+\frac{\log(\tfrac{kr}{\sqrt{2}})}{\pi}+\frac{2\gamma-\imagi \pi}{2\pi}+2\ROk(\zeta,\zeta)\\
	 	&\;\;\; -2r\int_{-\pi}^\pi\frac{1}{2\pi\, 2 r}\,\NOrk(y,u(t))\intd t+\OO(r\,\NOrk(y,u))\,.
	 \end{align*}
	 }
	 Solving for $\frac{1}{2\pi}\int_{-\pi}^\pi\,\NOrk(y,u(t))\intd t$ and substituting we obtain Equation (\ref{equ3:thm:NOrk-Asymptotics}).
\end{proof}


	Let $R>r>0$ and let $\OmR$ be defined in the way that $\Omr$ was introduced, that is $\OmR$ is a ball of radius $R$ at $\zeta\in \RR^2$ adjoined to the domain $\Om$, hence $\Omr\subsetneq\OmR$. Then for any $z_r\in \del B_r(\zeta)$, we define $z_R\in \del B_R(\zeta)$ to be the projection of $z_r$ along the normal vector to $B_R(\zeta)$. Thus we have that $z_R(t_z)= R\,(\cos(t_z),\sin(t_z))^\TransT+\zeta$ , for $t_z\in (-\pi,\pi)$. 
	
\begin{lemma}\label{lemma:Sings}
	Let $R>r>0$, for all $z_r,\,x_r \in \del{\Omr}$, $z_R,\,x_R \in \del{\Omr}$ we have that
	{\small
	\begin{align}
		\NOrk(z_r(t_z),x_r(t_x)) 
				&=  \frac{1}{2\pi}\log(1\!-\!\cos(t_z\!-\!t_x)) + Q_1(t_z, t_x)\,,\label{equ:lemma:1}\\
		\NOrk(z_R(t_z),x_r(t_x)) 
				&=  \frac{1}{2\pi}\log(1\!-\!\cos(t_z\!-\!t_x)) + Q_1(t_z, t_x)\,,\label{equ:lemma:12}\\
		\NOrk(z_R(t_z),x_R(t_x)) 
				&=  \frac{1}{4\pi}\log(1\!-\!\cos(t_z\!-\!t_x)) \nonumber\\
					+ &\frac{1}{4\pi}\log(R^4\!+\!r^4\!-\!2\,R^2r^2\cos(t_z\!-\!t_x)) 
					+ Q_2(t_z, t_x)\,,\label{equ:lemma:2}\\
		\del_{\nu_{x_R}}\NOrk(z_R(t_z),x_R(t_x)) 
				&=  \frac{-1}{4\pi R}  
					+ \frac{1}{2\pi R} \frac{r^2\,(R^2\cos(t_z\!-\!t_x)-r^2)}{R^4+r^4-2\,R^2 r^2 \cos(t_z\!-\! t_x)} 
					+ Q_3(t_z, t_x)\,,\label{equ:lemma:3}
	\end{align}
	}
	where $Q_1, Q_2, Q_3$ have removable singularities at $t_x = t_z$, when $R = r$.
\end{lemma}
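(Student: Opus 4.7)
The overall strategy is to use the splitting $\NOrk(z,x) = \Gk(z,x) + \ROrk(z,x)$ and exhibit, term by term, the pieces that become singular in the limit $t_x\to t_z$, $R\to r$. Two ingredients drive the calculation: the small-argument expansion $H_0^{(1)}(k\rho) = \tfrac{2\imagi}{\pi}\log(k\rho/2) + \tfrac{2\imagi\gamma}{\pi} + 1 + \OO(\rho^2\log\rho)$, and the Neumann image structure that $\ROrk$ inherits from the boundary condition on $\del B_r(\zeta)$, namely that the classical inversion $x\mapsto x^\ast := \zeta + r^2(x-\zeta)/|x-\zeta|^2$ produces a second logarithmic kernel at leading order in $kr$. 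Equation (\ref{equ:lemma:1}) is then a direct rewriting of equation (\ref{equ3:thm:NOrk-Asymptotics}) of Theorem \ref{thm:NOrk-Asymptotics} with $\theta_y = t_z,\ \theta_w = t_x$: absorb everything other than $\tfrac{1}{2\pi}\log(1-\cos(t_z-t_x))$ into the bounded $Q_1$.

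For (\ref{equ:lemma:12}) the point $z_R$ is exterior while $x_r\in\del B_r(\zeta)$ sits on the Neumann boundary. Using $|z_R - x_r|^2 = R^2+r^2-2Rr\cos(t_z-t_x)$, the fundamental solution $\Gk$ produces $\tfrac{1}{4\pi}\log(R^2+r^2-2Rr\cos(t_z-t_x))$, which at $R = r$ reduces to $\tfrac{1}{4\pi}\log(1-\cos(t_z-t_x))$ plus bounded terms. Because $x_r\in\del B_r(\zeta)$ implies $x_r^\ast = x_r$, the leading image contribution coming from $\ROrk$ is an identical logarithm, doubling the coefficient to the announced $\tfrac{1}{2\pi}\log(1-\cos(t_z-t_x))$. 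For (\ref{equ:lemma:2}) both arguments lie on $\del B_R(\zeta)$ outside $\Omr$, so no doubling occurs: the direct $\Gk$-part gives $\tfrac{1}{4\pi}\log(1-\cos(t_z-t_x))$ from $|z_R-x_R|^2 = 2R^2(1-\cos(t_z-t_x))$, while the reflection $x_R^\ast = \zeta+(r^2/R)(\cos t_x,\sin t_x)$ satisfies $|z_R-x_R^\ast|^2 = (R^4+r^4-2R^2r^2\cos(t_z-t_x))/R^2$, producing the second log. Both are manifestly singular on the diagonal $t_x = t_z$ exactly at $R = r$, and all smooth residuals are placed in $Q_2$.

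Equation (\ref{equ:lemma:3}) is then obtained by differentiating the expansion of $\NOrk(z_R,\cdot)$ in the outward radial direction at $\del B_R(\zeta)$. Parametrising $x = \zeta + s(\cos t_x,\sin t_x)$ so that $\del_{\nu_{x_R}}$ acts as $\del_s\big|_{s=R}$, the direct log $\tfrac{1}{4\pi}\log(R^2+s^2-2Rs\cos(t_z-t_x))$ differentiates to the constant $1/(4\pi R)$ term, while the image log — in its $s$-dependent form $\tfrac{1}{4\pi}\log((R^2s^2+r^4-2Rr^2s\cos(t_z-t_x))/s^2)$ — differentiates to the displayed rational expression $\tfrac{1}{2\pi R}\cdot\tfrac{r^2(R^2\cos(t_z-t_x)-r^2)}{R^4+r^4-2R^2r^2\cos(t_z-t_x)}$. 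A consistency check is that the two singular contributions must cancel at $R = r$ in order to reproduce $\del_{\nu}\NOrk|_+ = 0$ on $\del B_r(\zeta)$, which fixes the relative signs and shows that $Q_3$ remains bounded there.

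The main obstacle is the rigorous importation of the Laplace-type image structure into the Helmholtz setting, where the classical image method is not exact for $k > 0$. I would handle it by representing $\ROrk$ as a layer potential on $\del B_r(\zeta)$ whose density is fixed by $\del_{\nu_x}\NOrk|_+ = 0$, expanding each Helmholtz kernel via the small-argument Hankel expansion above, and checking that the leading logarithmic piece of each integral coincides with its Laplace counterpart — with the inversion $x\mapsto x^\ast$ producing exactly the second logarithmic singularity that appears in the lemma. The higher-order corrections are smooth in $(t_z,t_x)$ and of size $\OO(r\log r)$, matching the orders already tracked in Theorem \ref{thm:NOrk-Asymptotics}, and they are exactly what $Q_1, Q_2, Q_3$ are designed to absorb.
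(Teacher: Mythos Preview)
Your approach is genuinely different from the paper's, and it is largely correct in spirit, but it rests on a heuristic that you only partially justify.

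The paper does not invoke the image method at all. For \eqref{equ:lemma:1} and \eqref{equ:lemma:12} it applies Green's identity to the pair $(\NOrk,\Gamma^k)$; the factor $\tfrac{1}{2}$ coming from the Dirac mass sitting on the integration boundary is precisely what produces the doubled coefficient $\tfrac{1}{2\pi}$ in front of the logarithm. For \eqref{equ:lemma:3} the paper starts from the representation
\[
\del_{\nu_{x_R}}\NOrk(z_R,x_R)=\del_{\nu_{x_R}}\NOk(z_R,x_R)-\int_{\del B_r(\zeta)}\del_{\nu_{x_R}}\del_{\nu_{y_r}}\NOk(x_R,y_r)\,\NOrk(z_R,y_r)\,\intd\sigma_{y_r},
\]
splits both factors into singular plus smooth parts, inserts \eqref{equ:lemma:12} for the second factor, and then evaluates the one remaining explicit integral via the Fourier-series identity proved in Appendix~A. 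Equation \eqref{equ:lemma:2} is obtained the same way without the outer derivative. So the paper's route is: Green's identity $\to$ explicit kernel $\to$ closed-form Fourier computation.

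Your route --- read off the singular parts from the Laplace exterior-Neumann image $x\mapsto x^\ast=\zeta+r^2(x-\zeta)/|x-\zeta|^2$ and then differentiate --- gives the same singular structure and is more conceptual, but two points deserve care. First, deriving \eqref{equ:lemma:3} by differentiating your expansion of $\NOrk(z_R,\cdot)$ requires that expansion as a function of a \emph{variable} exterior point $x$, not merely its restriction to $\del B_R(\zeta)$; this is exactly where the non-exactness of the Helmholtz image method bites, and your layer-potential sketch would need to control the derivative of the remainder uniformly as $R\to r$. Second, your bookkeeping does not quite match the lemma's stated decomposition: the direct logarithm differentiates to $+\tfrac{1}{4\pi R}$, not the $-\tfrac{1}{4\pi R}$ displayed in \eqref{equ:lemma:3}; the discrepancy is a bounded $\tfrac{1}{2\pi R}$ that legitimately sits in $Q_3$, but your ``consistency check'' paragraph attributes the signs incorrectly. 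Finally, citing \eqref{equ3:thm:NOrk-Asymptotics} for \eqref{equ:lemma:1} is logically fine (no circularity) but only covers small $kr$; the lemma is stated without that restriction, and the paper's Green's-identity argument delivers it for arbitrary $r$.
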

	
\begin{proof}
	Equations (\ref{equ:lemma:1}) and (\ref{equ:lemma:12}) follow by readily using Green's identity on the convolution of $\NOrk$ with $\Gamma^k$, and PDE (\ref{PDE:NO}), where we have to consider that an integral whose integration-boundary is over the singularity of the Dirac measure leads to half of the evaluation of the integrand. 
	
	For Equation (\ref{equ:lemma:2}), its proof is a simplification of the derivation of Equation (\ref{equ:lemma:3}). For Equation (\ref{equ:lemma:3}) we have with Green's identity that
	\begin{align*}
		\del_{\nu_{x_R}}\!\NOrk(z_R,x_R) =
			\del_{\nu_{x_R}}\!\NOk(z_R,x_R) \!- \!\int_{\del B_r(\zeta)} \!\!\del_{\nu_{x_R}}\del_{\nu_{y_r}}\NOk(x_R,y_r) \NOrk(z_R,y_r)\intd \sigma_{y_r}\,.
	\end{align*}
	Splitting $\NOk$ in its singular part $\Gamma^k$ and its smooth remainder and subsequently extracting the singularity in $\Gamma^k$, and doing so for $\NOrk$ as well, where we use Equation (\ref{equ:lemma:12}), we obtain that 
	\begin{align*}
		&\del_{\nu_{x_R}}\NOrk(z_R,x_R) =
			\frac{1}{2\pi R} \frac{r\,(R\cos(t_z\!-\!t_x)-r)}{R^2+r^2-2 R r \cos(t_z\!-\!t_x)} + Q_3(t_z, t_x)\\
			&\;- r\int_{-\pi}^\pi   
				\!\!\frac{-1}{2 \pi}\frac{2 R r-(R^2+r^2)\cos(t_x-t)}{(R^2+r^2-2 R r \cos(t_x-t))^2} \, 
				\!\frac{1}{2 \pi}\!\log\Big(\frac{R^2+r^2}{2\,R\,r}\!-\!\cos(t_z-t)\Big)\intd t\,.
	\end{align*}
	Using the technical derivation shown in Appendix \ref{app:A} we prove Equation (\ref{equ:lemma:3}).
\end{proof}	
	
We decompose $\NOrk(z_r(t_z),x_r(t_x))$, for $z_r, x_r\in \del B_r(\zeta)$, into its singular part and a smooth enough part, that is,
\begin{align*}
	\NOrk(z_r(t_z),x_r(t_x))
		&= \frac{1}{2\pi}\log(1-\cos(t_z-t_x)) + \widetilde{\NOrk}(z_r(t_z),x_r(t_x))\,,
\end{align*}
and furthermore we express $\widetilde{\NOrk}$ through a Fourier series as
\begin{align}\label{SmoothFourierExp}
	\widetilde{\NOrk}(z_r(t_z),x_r(t_x)) = \sum_{n=0}^\infty p_{z_r}^{(n)} \cos(n\, t_x)+ q_{z_r}^{(n)} \sin(n\, t_x)\,.
\end{align}

\begin{theorem}\label{thm:RmrApprox}
	For $k r>0$ small enough and for all $z,\,x \not\in \overline{\Omr}$ , $x\neq z$, we have that
	{\small
	\begin{align}\label{equ1:thm:RmrApprox}
		\NOrk(z_R,x_r) = &\,\NOrk(z_r,x_r) +\frac{1}{2\pi}\log\Big(\frac{\frac{R^2+r^2}{2\,R\,r}-\cos(t_z-t_x)}{1-\cos(t_z-t_x)}\Big)+\OO_{L^2}\Big(\frac{(R-r)^2}{r^2}\Big)\,,
	\end{align}
	}
	where the $\OO_{L^2}$ term is a function with a $L^2(\del\Om_r)$ norm, which is in $\OO\big(\frac{(R-r)^2}{r^2}\big)$, in the $x_r$ variable. Moreover, 
	{\small
	\begin{align}\label{equ2:thm:RmrApprox}
		\del_{\nu_{x_R}}\NOrk(z_R,x_R) 
			&= \del_{\nu_{x_R}}\NOk(z_R,x_R) +\frac{r^2}{2\pi\, R}\frac{R^2\cos(t_z-t_x)-r^2}{R^4+r^4-2 R^2 r^2\cos(t_z-t_x)}\nonumber\\
			&\mkern-110mu- r\int_{-\pi}^\pi \del_{\nu_{x_R}}\del_{\nu_{y_r(t)}} \widetilde{\NOk}(x_R,y_r(t)) \, 
			\Big(
				\tfrac{1}{2\pi}\log\big(\tfrac{R^2+r^2}{2\,R\,r}-\cos(t_z\!-\! t)\big)
				+\widetilde{\NOrk}(z_r,y_r(t))
			\Big)\intd t\nonumber\\
			&\mkern-110mu -\frac{1}{2 R}\sum_{n=1}^\infty n\,(\tfrac{r}{R})^n\big(p_{z_r}^{(n)} \cos(n\, t_x)+ q_{z_r}^{(n)} \sin(n\, t_x)\big)\nonumber\\
			&\mkern-110mu +\,\OO\Big(\frac{(R-r)^2}{r}\Big)\,,
	\end{align}
	}
	where
	{\small
	\begin{align*}
		\del_{\nu_{x_R}}\del_{\nu_{y_r}}\widetilde{\NOk}(x_R,y_r)
			\DEF \del_{\nu_{x_R}}\del_{\nu_{y_r}}\NOk(x_R,y_r)-\frac{-1}{2 \pi}\frac{2 R r-(R^2+r^2)\cos(t_x-t_y)}{(R^2+r^2-2 R r \cos(t_x-t_y))^2}\,.
	\end{align*}
	}
	Furthermore,  we have
	{\small 
	\begin{align}\label{equ3:thm:RmrApprox}
		 \NORk(z_R,x_R) 
			=& \,\NOrk(z_r,x_r)+(\NOk(z_R,x_R)-\NOk(z_r,x_r))\nonumber\\
			&\mkern-90mu -r\, (R\!-\! r)\int_{-\pi}^\pi \del_{\nu_{x_r}}\del_{\nu_{y_r(t)}} \widetilde{\NOk}(x_r,y_r(t))\NOrk(z_r,y_r(t))\intd t\nonumber\\
			&\mkern-90mu -r\int_{-\pi}^\pi \del_{\nu_{y_r(t)}} \widetilde{\NOk}(x_r,y_r(t)) \, 
				\tfrac{1}{2\pi}\log\bigg(\frac{\tfrac{R^2+r^2}{2\,R\,r}-\cos(t_z-t)}{1-\cos(t_z-t)}\bigg)
			\intd t\nonumber\\
			&\mkern-90mu  -\tfrac{1}{2}\sum_{n=1}^\infty\big((\tfrac{r}{R})^n-(\tfrac{r}{R})^{2n}\big)\big(p_{z_r}^{(n)} \cos(n\, t_x)+ q_{z_r}^{(n)} \sin(n\, t_x)\big)\nonumber\\
			&\mkern-90mu - R\int_{-\pi}^\pi  \widetilde{\del_{\nu_{y_R(t)}}\NOrk}(x_R,y_R(t))  \NOrk(z_r,y_r(t))\intd t\nonumber\\
			&\mkern-90mu +\,\OO\Big(\frac{(R-r)^2}{r}\Big)\,,
	\end{align}
	}
	where
	{\small
	\begin{align*}
		\del_{\nu_{y_r}} \widetilde{\NOk}(x_R,y_r)
			&\DEF \del_{\nu_{y_r}} \NOk(x_R,y_r)-\frac{1}{2\pi}\frac{r-R\cos(t_x\!-\! t_y)}{R^2+r^2-2Rr\cos(t_x\!-\! t_y)}\,, \\
		\widetilde{\del_{\nu_{x_R}}\NOrk}(z_R,x_R)
			&\DEF \del_{\nu_{x_R}}\NOrk(z_R,x_R) -\frac{1}{2\pi\,2 R}-\frac{r^2}{2\pi\, R}\frac{R^2\cos(t_z-t_x)-r^2}{R^4+r^4-2 R^2 r^2\cos(t_z-t_x)} \,.
	\end{align*}
	}
\end{theorem}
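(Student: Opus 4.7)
My plan is to prove the three equations with a common strategy, modeled on the proof of Theorem \ref{thm:NOrk-Asymptotics}: invoke Green's identity to obtain a boundary-integral representation on $\del B_r(\zeta)$ or $\del B_R(\zeta)$, separate the explicit singular contributions via Lemma \ref{lemma:Sings}, and control the remaining smooth integrals by combining the Fourier expansion (\ref{SmoothFourierExp}) with Taylor expansion in the small parameter $R-r$.

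For (\ref{equ1:thm:RmrApprox}), I apply the jump representation (\ref{proof:Asymp:equ:NOrk=NOk+2int}) to $\NOrk(z_R, x_r)$ and to $\NOrk(z_r, x_r)$ and subtract, obtaining
\begin{align*}
\NOrk(z_R, x_r) - \NOrk(z_r, x_r)
&= 2[\NOk(z_R, x_r) - \NOk(z_r, x_r)]\\
&\quad - 2\int_{\del B_r(\zeta)}\del_{\nu_w}\NOk(x_r, w)\,[\NOrk(z_R, w) - \NOrk(z_r, w)]\intd\sigma_w.
\end{align*}
Writing $\NOk = \Gk + \ROk$ and using the small-argument Hankel expansion $\Gk(y,w) = \tfrac{1}{2\pi}\log|y-w| + \mathrm{smooth}$, the first bracket produces exactly the explicit logarithmic correction $\tfrac{1}{2\pi}\log\big(\tfrac{(R^2+r^2)/(2Rr) - \cos(t_z-t_x)}{1-\cos(t_z-t_x)}\big)$ plus an $\OO((R-r)/r)$ smooth remainder coming from $\ROk$ and the higher-order Hankel terms. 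I then handle the boundary integral by a bootstrap: the Lipschitz continuity of $\NOrk$ in its first variable yields $\|\NOrk(z_R, \cdot) - \NOrk(z_r, \cdot)\|_{L^2(\del B_r)} = \OO(R-r)$ as a first pass, and substituting this back into the identity, while using the smoothness of the kernel $\del_{\nu_w}\NOk(x_r, \cdot)$ on $\del B_r$, upgrades the bound to the claimed $\OO_{L^2}((R-r)^2/r^2)$.

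For (\ref{equ2:thm:RmrApprox}), I apply $\del_{\nu_{x_R}}$ to the analogous Green's representation of $\NOrk(z_R, x_R)$, producing a hyper-singular boundary integral on $\del B_r$ with kernel $\del_{\nu_{x_R}}\del_{\nu_{y_r}}\NOk(x_R, y_r)$. I split this kernel according to the definition of $\del_\nu\del_\nu\widetilde{\NOk}$ stated in the theorem, isolating the explicit Poisson-type term $\tfrac{-1}{2\pi}(2Rr - (R^2+r^2)\cos)/(R^2+r^2 - 2Rr\cos)^2$. Using (\ref{equ1:thm:RmrApprox}) to replace $\NOrk(z_R, \cdot)$ on $\del B_r$ by $\NOrk(z_r, \cdot)$ plus the explicit log correction (up to $\OO((R-r)^2/r^2)$), I then insert the Fourier expansion (\ref{SmoothFourierExp}) for $\widetilde{\NOrk}(z_r, \cdot)$. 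The resulting Poisson-type integrals against $\cos(nt)$ and $\sin(nt)$ evaluate by the classical identity that the Poisson extension from $\del B_R$ into the interior sends $\cos(n\cdot)$ to $(r/R)^n\cos(n\cdot)$; radial differentiation produces the prefactor $n/R$ and yields the series $-\tfrac{1}{2R}\sum n(r/R)^n(p^{(n)}_{z_r}\cos(nt_x) + q^{(n)}_{z_r}\sin(nt_x))$, while the remaining pieces arrange themselves into the stated form.

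For (\ref{equ3:thm:RmrApprox}), the most demanding equation, I apply Green's identity between $\NORk$ and $\NOrk$ on the annulus bounded by $\del B_r$ and $\del B_R$; both functions satisfy the Neumann condition on $\del\Om$, so only the contribution on $\del B_R(\zeta)$ survives, giving $\NORk(z_R, x_R) = 2\NOrk(z_R, x_R) - 2\int_{\del B_R(\zeta)}\del_{\nu_u}\NOrk(x_R, u)\NORk(z_R, u)\intd\sigma_u$. I then use (\ref{equ1:thm:RmrApprox}) to rewrite $\NOrk(z_R, x_R)$ as $\NOrk(z_r, x_r)$ plus $\NOk(z_R, x_R) - \NOk(z_r, x_r)$ and corrections, and project the boundary integral from $\del B_R$ down to $\del B_r$ via the radial map with Taylor expansion in $R - r$. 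Splitting the kernel $\del_{\nu_u}\NOrk(x_R, u)$ according to (\ref{equ:lemma:3}) isolates the smooth-remainder integral involving $\widetilde{\del_{\nu_u}\NOrk}$ and a Poisson-type integral that, after inserting (\ref{SmoothFourierExp}), evaluates by the Poisson-kernel identity to $-\tfrac{1}{2}\sum_n[(r/R)^n - (r/R)^{2n}](p^{(n)}_{z_r}\cos(nt_x) + q^{(n)}_{z_r}\sin(nt_x))$; the two geometric factors $(r/R)^n$ and $(r/R)^{2n}$ match the two distinct singular contributions visible in (\ref{equ:lemma:3}). The hardest step will be the bookkeeping: tracking the layer-potential decompositions simultaneously so that the four integrals and the series on the right-hand side appear in precisely the stated form, with the logarithmic-integral formulas from Appendix \ref{app:A} essential for handling integrals of logarithms against Poisson kernels without introducing spurious smooth terms.
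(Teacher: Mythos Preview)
Your outline for (\ref{equ2:thm:RmrApprox}) and (\ref{equ3:thm:RmrApprox}) matches the paper's argument closely: Green's identity on $\RR^2\setminus\Omr$ and then on $\RR^2\setminus\OmR$, the singular/smooth splitting of the kernels, insertion of (\ref{equ1:thm:RmrApprox}), and evaluation of the Poisson-type integrals via the Fourier identities (\ref{equ5:thm:RmrApprox}) and (\ref{equ7:thm:RmrApprox}) and Appendix \ref{app:A}. The paper also derives an intermediate expression for $\NOrk(z_R,x_R)$ (their (\ref{equ8:thm:RmrApprox})) before feeding it into (\ref{equ6:thm:RmrApprox}), which organizes the bookkeeping you anticipate.

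For (\ref{equ1:thm:RmrApprox}), however, your route is genuinely different from the paper's and the bootstrap step is a gap. Subtracting the two instances of (\ref{proof:Asymp:equ:NOrk=NOk+2int}) gives an \emph{integral equation} for $D(\cdot)\DEF\NOrk(z_R,\cdot)-\NOrk(z_r,\cdot)$ of the form $D=F-2\mathcal{K}[D]$, with $\mathcal{K}$ the Neumann--Poincar\'e type operator on $\del B_r$. The forcing $F=2[\NOk(z_R,x_r)-\NOk(z_r,x_r)]$ produces the logarithmic correction plus a smooth remainder of order $(R-r)$, not $(R-r)^2$; and since $\mathcal{K}$ has $\OO(1)$ operator norm, feeding a first-pass $\OO(R-r)$ bound for $D$ back into $2\mathcal{K}[D]$ returns an $\OO(R-r)$ term, not an improvement. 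There is no smallness parameter in the iteration, so the bootstrap does not gain an order. What the paper does instead is a direct radial Taylor expansion of $\NOrk(z_R,x_r)$ about $z_r$: the crucial point is that the \emph{first}-order term $(R-r)\,\del_{\nu_{z_r}}\NOrk(z_r,x_r)$ vanishes identically, because by symmetry $\NOrk(z,x)=\NOrk(x,z)$ the normal derivative in the first slot equals the normal derivative in the second, which is zero on $\del B_r$ by the Neumann condition. This is the mechanism that yields $(R-r)^2$ accuracy; after that, one only needs to extract the logarithmic singularity from the second-order remainder so that it is controlled in $L^2$. Your subtraction scheme does not invoke $\del_{\nu_{z_r}}\NOrk(z_r,\cdot)=0$ and therefore misses precisely the cancellation that produces the claimed error.
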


The idea of proving this theorem is to extract the singularities developed in Lemma \ref{lemma:Sings} in the integral expression for $\NORk$.  Then any explicitly appearing integrals are solved in a similar way as described in Appendix \ref{app:A} by using Fourier series.

\begin{proof}
	Assuming $z_r\neq x_r$, we can use Taylor's theorem to obtain that 
	{\small 
	\begin{align*}
		\NOrk(z_R,x_r) = \NOrk(z_r,x_r)+(R-r)\del_{\nu_{z_r}}\NOrk(z_r,x_r)+\tfrac{1}{2}(R-r)^2\del_{\nu_{z_r}}^2\NOrk(w_{R,r},x_r)\,,
	\end{align*}}
	for some $w_{R,r}\in \RR^2$ between $z_R$ and $z_r$. We note that $\del_{\nu_{z_r}}\NOrk(z_r,x_r)=0$. We need the term $\tfrac{1}{2}(R-r)^2\del_{\nu_{z_r}}^2\NOrk(w_{R,r},x_r)$ to be in $\OO_{L^2}$, but that is not the case due to the singular term in $\NOrk$. Hence we extract the singular term from $\NOrk(z_R,x_r)$ and then we can infer that
	{\small \begin{align*}
		\NOrk(z_R,x_r) 
			&= \tfrac{1}{2\pi}\log(R^2+r^2-2Rr\cos(t_z-t_x))+\widetilde{\NOrk}(z_R,x_r)\\
			&= \tfrac{1}{2\pi}\log(R^2+r^2-2Rr\cos(t_z-t_x))+\widetilde{\NOrk}(z_r,x_r)\\
				&\mkern+50mu +(R-r)\del_{\nu_{z_r}}\widetilde{\NOrk}(z_r,x_r)+\tfrac{1}{2}(R-r)^2\del_{\nu_{z_r}}^2\widetilde{\NOrk}(w_{R,r},x_r)\\
			&= \tfrac{1}{2\pi}\log\Big(\frac{R^2+r^2-2Rr\cos(t_z-t_x)}{2 r^2-2 r^2 \cos(t_z-t_x)}\Big)+\NOrk(z_r,x_r)\\
				&\mkern+50mu +(R-r)\big(0-\tfrac{1}{2\pi\, r}\big)+\OO((R-r)^2)\,.
	\end{align*}}
	Extracting the term $\log(\tfrac{R}{r})$ from the logarithm term and using the Taylor approximation for $R\rightarrow r$, on that extraction, we then obtain Equation (\ref{equ1:thm:RmrApprox}). 
	Considering Green's identity we have that
	{\small \begin{align*}
		\NOrk(z_R,x_R)=\NOk(z_R,x_R) - r \int_{-\pi}^\pi \del_{\nu_{y_r(t)}} \NOk(x_R,y_r(t)) \NOrk(z_R,y_r(t))\intd t\,.
	\end{align*}}
	Next we apply $\del_{x_R}$ on both sides and then interchange the integral and $\del_{x_R}$. This leads to the term $\del_{\nu_{x_R}}\del_{\nu_{y_r}} \NOk(x_R,y_r)$, whose singular part we extract from $\del_{\nu_{x_R}}\del_{\nu_{y_r}} \NOk(x_R,y_r)$. The equation then reads
	{\small \begin{align*}
		\del_{\nu_{x_R}}\NOrk&(z_R,x_R) =\del_{\nu_{x_R}}\NOk(z_R,x_R) - r\!\int_{-\pi}^\pi \del_{\nu_{x_R}}\del_{\nu_{y_r(t)}} \widetilde{\NOk}(x_R,y_r(t)) \NOrk(z_R,y_r(t))\intd t\\
			&- r \int_{-\pi}^\pi  \frac{-1}{2 \pi}\frac{2 R r-(R^2+r^2)\cos(t_x-t)}{(R^2+r^2-2 R r \cos(t_x-t))^2} \NOrk(z_R,y_r(t))\intd t\,.
	\end{align*}}
	Then we use Equation (\ref{equ1:thm:RmrApprox}) and this leads us to the equation
	 {\small \begin{align}\label{equ4:thm:RmrApprox}
		\del_{\nu_{x_R}}\NOrk & (z_R,x_R) =\del_{\nu_{x_R}}\NOk(z_R,x_R) \nonumber\\
			&- r\int_{-\pi}^\pi \del_{\nu_{x_R}}\del_{\nu_{y_r(t)}} \widetilde{\NOk}(x_R,y_r(t)) \, \tfrac{1}{2 \pi}\log\big(\tfrac{R^2+r^2}{2\,R\,r}-\cos(t_z-t)\big)\intd t\nonumber\\
			&- r\int_{-\pi}^\pi \del_{\nu_{x_R}}\del_{\nu_{y_r(t)}} \widetilde{\NOk}(x_R,y_r(t)) \, \widetilde{\NOrk}(z_r,y_r(t))\intd t\nonumber\\
			&- r \int_{-\pi}^\pi  \frac{-1}{2 \pi}\frac{2 R r-(R^2+r^2)\cos(t_x-t)}{(R^2+r^2-2 R r \cos(t_x-t))^2} \, \tfrac{1}{2 \pi}\log\big(\tfrac{R^2+r^2}{2\,R\,r}-\cos(t_z-t)\big)\intd t\nonumber\\
			&- r \int_{-\pi}^\pi  \frac{-1}{2 \pi}\frac{2 R r-(R^2+r^2)\cos(t_x-t)}{(R^2+r^2-2 R r \cos(t_x-t))^2} \, \widetilde{\NOrk}(z_r,y_r(t))\intd t \nonumber\\
			&- r \int_{-\pi}^\pi  \frac{-1}{2 \pi}\frac{2 R r-(R^2+r^2)\cos(t_x-t)}{(R^2+r^2-2 R r \cos(t_x-t))^2} \, (\tfrac{1}{2 \pi}\log(\nicefrac{r}{R})-\tfrac{R-r}{2\pi r}+\OO_{L^2}((R-r)^2))\intd t\nonumber\\
			&+\OO\Big(\frac{(R-r)^2}{r}\Big)
			\,.
	\end{align}}
	Note that
	{\small 
	\begin{align}\label{equ5:thm:RmrApprox}
		\int_{-\pi}^\pi  \frac{-1}{2 \pi}\frac{2 R r-(R^2+r^2)\cos(t_x-t)}{(R^2+r^2-2 R r \cos(t_x-t))^2} \cos(n (t_z-t))\intd t=\frac{n}{2 R r}\Big(\frac{r}{R}\Big)^n\cos(n(t_z-t_x))\,,
	\end{align}
	}
	for all $n\in\NN_0$, which we can readily show from the $2\pi$-periodicity by using trigonometric formulas and applying an induction on $n\geq 1$. Furthermore, we have that
	{\small \begin{align*}
		\frac{1}{2 \pi}\log\big(\tfrac{R^2+r^2}{2\,R\,r}-\cos(t_z-t)\big)
			= \frac{1}{2 \pi}\log\big(\tfrac{R}{2\,r}\big)
			- \frac{1}{\pi}\sum_{n=1}^\infty\frac{(\nicefrac{r}{R})^n}{n}\cos(n(t_z-t))\,,
	\end{align*}
	}
	With that identity we can determine all integrals in Equation (\ref{equ4:thm:RmrApprox}) and show Equation (\ref{equ2:thm:RmrApprox}). For an elaborated calculation of the third integral, see Appendix \ref{app:A}.\\
	Using Green's identity on $\NORk(z_R,x_R)$, we can infer that
	{\small 
	\begin{align}\label{equ6:thm:RmrApprox}
		\NORk(z_R,x_R) = 2\NOrk(z_R,x_R) 
			- 2 R\int_{-\pi}^\pi  \del_{\nu_{y_R(t)}}\NOrk(x_R,y_R(t))  \NORk(z_R,y_R(t))\intd t\,.
	\end{align}
	}
	Similar to the derivation of Equation (\ref{equ2:thm:RmrApprox}), we can compute that
	{\small 
	\begin{align*}
		\NOrk&(z_R,x_R) = \NOk(z_R,x_R) \nonumber\\
			&- r\int_{-\pi}^\pi \del_{\nu_{y_r(t)}} \widetilde{\NOk}(x_R,y_r(t)) \, \tfrac{1}{2 \pi}\log\big(\tfrac{R^2+r^2}{2\,R\,r}-\cos(t_z-t)\big)\intd t\nonumber\\
			&- r\int_{-\pi}^\pi \del_{\nu_{y_r(t)}} \widetilde{\NOk}(x_R,y_r(t)) \, \widetilde{\NOrk}(z_r,y_r(t))\intd t\nonumber\\
			&- r \int_{-\pi}^\pi  \frac{1}{2 \pi}\frac{r-R\cos(t_x-t)}{R^2+r^2-2 R r \cos(t_x-t)} \, \tfrac{1}{2 \pi}\log\big(\tfrac{R^2+r^2}{2\,R\,r}-\cos(t_z-t)\big)\intd t\nonumber\\
			&- r \int_{-\pi}^\pi  \frac{1}{2 \pi}\frac{r-R\cos(t_x-t)}{R^2+r^2-2 R r \cos(t_x-t)} \, \widetilde{\NOrk}(z_r,y_r(t))\intd t \nonumber\\
			&- r \int_{-\pi}^\pi  \frac{1}{2 \pi}\frac{r-R\cos(t_x-t)}{R^2+r^2-2 R r \cos(t_x-t)} \, (\tfrac{1}{2 \pi}\log(\nicefrac{r}{R})-\tfrac{R-r}{2\pi r}+\OO_{L^2}((R-r)^2))\intd t\nonumber\\
			&+\OO\Big(\frac{(R-r)^2}{r}\Big)
			\,.
	\end{align*}
	}
	Using that
	{\small 
	\begin{align}\label{equ7:thm:RmrApprox}
		\int_{-\pi}^\pi  \frac{1}{2 \pi}\frac{r-R\cos(t_x\!-\! t)}{R^2+r^2-2 R r \cos(t_x\!-\! t)} \cos(n (t_z\!-\! t))\intd t
			=\begin{cases}
    			0\,,& \text{if } n=0\,,\\
    			\frac{-1}{2\pi\,r}(\tfrac{r}{R})^n\cos(n(t_x\!-\! t_z)),              & \text{if } n\geq 1\,,
			\end{cases}
	\end{align}
	}
	we readily see that
	{\small 
	\begin{align}\label{equ8:thm:RmrApprox}
		\NOrk(z_R,x_R) &= \NOk(z_R,x_R) +\frac{1}{4\pi}\log\Big(\frac{R^4+r^4-2 R^2 r^2\cos(t_z\!-\!t_x)}{R^4}\Big)\\
			&\mkern-70mu- r\int_{-\pi}^\pi \del_{\nu_{y_r(t)}} \widetilde{\NOk}(x_R,y_r(t)) \, 
			\Big(
				\tfrac{1}{2\pi}\log\big(\tfrac{R^2+r^2}{2\,R\,r}-\cos(t_z-t)\big)
				+\widetilde{\NOrk}(z_r(t_z),y_r(t))
			\Big)\intd t\nonumber\\
			&\mkern-70mu +\tfrac{1}{2}\widetilde{\NOrk}(z_r,x_r) -\tfrac{1}{2} p_{z_r}^{(0)}-\tfrac{1}{2}\sum_{n=1}^\infty\big(1- (\tfrac{r}{R})^n\big)\big(p_{z_r}^{(n)} \cos(n\, t_x)+ q_{z_r}^{(n)} \sin(n\, t_x)\big)\nonumber\\
			&\mkern-70mu +\,\OO\Big(\frac{(R-r)^2}{r}\Big)\,,
	\end{align}
	}
	where the logarithm term is derived similarly as in Appendix \ref{app:A}. We consider the integral term in Equation (\ref{equ6:thm:RmrApprox}). To this end we will apply Equation (\ref{equ2:thm:RmrApprox}) and consider the singular parts of $\del_{\nu_{x_R}}\NOrk(z_R,x_R)$. Hence we define
	{\small 
	\begin{align*}
		\del_{\nu_{x_R}}\NOrk(z_R,x_R) = \frac{1}{2\pi\,2 R}+\frac{r^2}{2\pi\, R}\frac{R^2\cos(t_z-t_x)-r^2}{R^4+r^4-2 R^2 r^2\cos(t_z-t_x)}+\widetilde{\del_{\nu_{x_R}}\NOrk}(z_R,x_R) \,.
	\end{align*}
	}
	Consider that $\widetilde{\del_{\nu_{x_R}}\NOrk}(z_R,x_R)$ is of order $\OO(R-r)$, because using Taylor series we have that
	{\small 
	\begin{align*}
		\widetilde{\del_{\nu_{x_R}}\NOrk}(z_R,x_R) 
			=&\, \widetilde{\del_{\nu_{x_R}}\NOrk}(z_R,x_r)+\OO(R-r)\\
			=&\, \del_{\nu_{x_R}}\NOrk(z_R,x_r)+\OO(R-r)\\
			 &\mkern-100mu- \Big(\frac{r-R\cos(t_z\!-\!t_x)}{2\pi(R^2+r^2-2 R r \cos(t_z\!-\!t_x))}+\frac{1}{2\pi \,r}\frac{r^3(R\cos(t_z\!-\!t_x)-r)}{r^2(R^2+r^2-2 R r \cos(t_z\!-\!t_x))}\Big)\\
			=&\, \OO(R-r)\,.
	\end{align*}
	}
	Then, applying the singular decomposition to the integral in Equation (\ref{equ6:thm:RmrApprox}), and using the same techniques as are those used in Appendix \ref{app:A}, we have that 
	{\small 
	\begin{align*}
		\NORk(z_R,x_R) 
			=& \,2\,\NOrk(z_R,x_R) +\frac{\log(2)}{4 \pi}-\frac{1}{2\pi}\int_{-\pi}^{\pi}\widetilde{\NORk}(z_R,y_R(t))\intd t\\
			&\mkern-90mu -\frac{1}{2\pi}\log\bigg(\frac{R^4+r^4-2R^2r^2\cos(t_z\!-\!t_x)}{R^4(1-\cos(t_z\!-\!t_x))}\bigg)-\NORk(z_R,x_R)\\
			&\mkern-90mu +\tfrac{1}{2\pi}\int_{-\pi}^{\pi}\widetilde{\NORk}(z_R,y_R(t))\intd t+\sum_{n=1}^\infty\big(1- (\tfrac{r}{R})^{2n}\big)\big(p_{z_R}^{(n)} \cos(n\, t_x)+ q_{z_R}^{(n)} \sin(n\, t_x)\big)\\
			&\mkern-90mu - 2 R\int_{-\pi}^\pi  \widetilde{\del_{\nu_{y_R(t)}}\NOrk}(x_R,y_R(t))  \NORk(z_R,y_R(t))\intd t\,.
	\end{align*}
	}
	Then we can apply Equation (\ref{equ8:thm:RmrApprox}) and obtain
	{\small 
	\begin{align}\label{equ9:thm:RmrApprox}
		2 \NORk(z_R,x_R) 
			=& \,2\,\NOk(z_R,x_R)+\NOrk(z_r,x_r)+\frac{\log(2)}{2 \pi}- p_{z_r}^{(0)}\nonumber\\
			&\mkern-90mu- 2r\int_{-\pi}^\pi \del_{\nu_{y_r(t)}} \widetilde{\NOk}(x_R,y_r(t)) \, 
			\Big(
				\tfrac{1}{2\pi}\log\big(\tfrac{R^2+r^2}{2\,R\,r}-\cos(t_z-t)\big)
				+\widetilde{\NOrk}(z_r,y_r(t))
			\Big)\intd t\nonumber\\
			&\mkern-90mu  -\sum_{n=1}^\infty\big(1- (\tfrac{r}{R})^n\big)\big(p_{z_r}^{(n)} \cos(n\, t_x)+ q_{z_r}^{(n)} \sin(n\, t_x)\big)\nonumber\\
			&\mkern-90mu +\sum_{n=1}^\infty\big(1- (\tfrac{r}{R})^{2n}\big)\big(p_{z_R}^{(n)} \cos(n\, t_x)+ q_{z_R}^{(n)} \sin(n\, t_x)\big)\nonumber\\
			&\mkern-90mu - 2 R\int_{-\pi}^\pi  \widetilde{\del_{\nu_{y_R(t)}}\NOrk}(x_R,y_R(t))  \NORk(z_R,y_R(t))\intd t\nonumber\\
			&\mkern-90mu +\,\OO\Big(\frac{(R-r)^2}{r}\Big)\,.
	\end{align}
	}
	We can further simplify this approximation by using Green's identity on $\NOrk(z_r,x_r)$, with $\NOk(z_r,x_r)$, and using Taylor series on $\NOk(z_R,x_R)$ and on $\widetilde{\NOk}$. This leads us to the equation
	{\small 
	\begin{align*}
		2 \NORk(z_R,x_R) 
			=& \,2\,\NOrk(z_r,x_r)+2(\NOk(z_R,x_R)-\NOk(z_r,x_r))\nonumber\\
			&\mkern-90mu -2r\,\int_{-\pi}^\pi \big(\del_{\nu_{y_r(t)}} \widetilde{\NOk}(x_R,y_r(t))-\del_{\nu_{y_r(t)}} \widetilde{\NOk}(x_r,y_r(t))\big)\NOrk(z_r,y_r(t))\intd t\nonumber\\
			&\mkern-90mu -2r\int_{-\pi}^\pi \del_{\nu_{y_r(t)}} \widetilde{\NOk}(x_r,y_r(t)) \, 
				\tfrac{1}{2\pi}\log\bigg(\frac{\tfrac{R^2+r^2}{2\,R\,r}-\cos(t_z-t)}{1-\cos(t_z-t)}\bigg)
			\intd t\nonumber\\
			&\mkern-90mu  -\sum_{n=1}^\infty\big(1- (\tfrac{r}{R})^n\big)\big(p_{z_r}^{(n)} \cos(n\, t_x)+ q_{z_r}^{(n)} \sin(n\, t_x)\big)\nonumber\\
			&\mkern-90mu +\sum_{n=1}^\infty\big(1- (\tfrac{r}{R})^{2n}\big)\big(p_{z_R}^{(n)} \cos(n\, t_x)+ q_{z_R}^{(n)} \sin(n\, t_x)\big)\\
			&\mkern-90mu - 2 R\int_{-\pi}^\pi  \widetilde{\del_{\nu_{y_R(t)}}\NOrk}(x_R,y_R(t))  \NORk(z_R,y_R(t))\intd t\\
			&\mkern-90mu +\,\OO\Big(\frac{(R-r)^2}{r}\Big)\,.
	\end{align*}
	}
	Using that $\widetilde{\del_{\nu_{y_R(t)}}\NOrk}(x_R,y_R(t))=\OO(R-r)$, $\big(1- (\tfrac{r}{R})^n\big)=\OO(R-r)$ and that the logarithm in the second integral is in $\OO_{L^2}(R-r)$, we can infer that $\NORk(z_R,x_R) -\,\NOrk(z_r,x_r)=\OO(R-r)$ and thus make further simplifications, which lead to Equation (\ref{equ3:thm:RmrApprox}) and finishes the proof.
\end{proof}

\begin{remark}\label{rem:1}
	We note here, that Equation (\ref{equ9:thm:RmrApprox}) is numerically more stable than Equation (\ref{equ3:thm:RmrApprox}) in Theorem \ref{thm:RmrApprox}. We expect the reason to be that the constant error in the first step is lowered by the factor $(R-r)$ and the factor $1/2$ in every subsequent step.
\end{remark}

\section{Numerical Implementation and Application}\label{sec:NumImplTest}

\subsection{Applying Theorem \ref{thm:RmrApprox} - Gradually Increasing the Radius}\label{sec:NumImpl:RmrIncrease}
With Theorem \ref{thm:RmrApprox} we are able to evaluate the Neumann function $\NOrk(z_r,x_r)$ while we increase the radius of the circular sub-domain $B_r(\zeta)$ in $\Omr$ by $\Delta r$, where $z_r,x_r \in \del B_r(\zeta)$, with an error in $\OO((R-r)^2)$. Similar to how we numerically evaluate the solution to an ordinary differential equation $y(t) = f(t, y(t))$, $y(t_0)=y_0$, using the explicit Euler scheme, where we start at $t_0$ and then evaluate the function $y$ at $t_0+\Delta t$ with an error in $\OO((\Delta t)^2)$, we can now evaluate the function $\NOrk(z_r,x_r)$ at radius $R = r+\Delta r$. For the Euler scheme, we can show using Grönwall's inequality that the global error is  $\OO(\Delta t)$. Thus we expect the global error of $\NOrk(z_r,x_r)$ to be  $\OO(\Delta r)$.

The domain $\Omr$ for the numerical evaluation is set to be $\Omr = B_r([0,0]^\mathrm{T})\cup B_1([1,2.5]^\mathrm{T})$. We increase the radius of $B_r$ in $\Omr$ by $\Delta r$ successively until the radius reaches $1$. In every step we compute the first $N_f$ Fourier coefficients of the smooth part of $\NOrk(z_r, \,\cdot\, )$, see (\ref{SmoothFourierExp}), using Theorem \ref{thm:RmrApprox} with Remark \ref{rem:1}, where we also have to discretize $z_r$ in such a way that we have $N_f$ equidistant points on $\del B_r(0)$, where one point is set at $[-r,0]^\mathrm{T}$. For the first step we use  Equation (\ref{equ3:thm:NOrk-Asymptotics}) in Theorem \ref{thm:NOrk-Asymptotics}. 

\begin{figure}[h]
  \centering
  \includegraphics[width=0.99\textwidth]{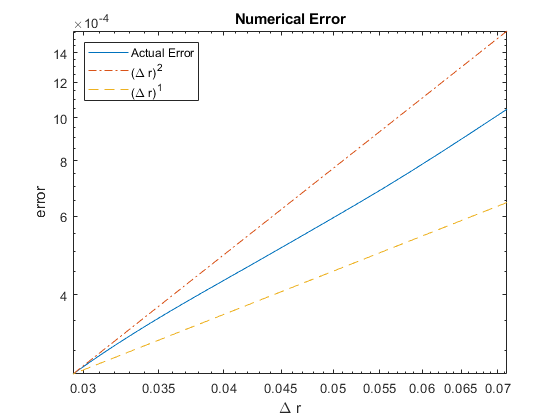}
  \caption{We have depicted the average error between the smooth enough Neumann function $\widetilde{\mathrm{N}_{\Om}^k}(z_1,x_1)$ and the numerical approximation with respect to the radius increase $\Delta r$, at the $N_f\in\NN$ points $z_1,x_1 = 1\cdot[\cos(t_n), \sin(t_n)]^\mathrm{T}$, $t_n = -\pi, ... , \pi$. We see that the error is at least linear in $\Delta r$. We set here $k=1$, $N_f=2^8$.}\label{fig:InflationErrorPlot}
\end{figure}

In Figure \ref{fig:InflationErrorPlot} we have depicted the error, which we calculated using MATLAB, between the actual Neumann function and the approximation given through the algorithm corresponding to $\Delta r$. To be more precise, we computed all possible $N_f^2$ discretized values of the smooth enough part of $\mathrm{N}_{\Om_1}^k(z_1,x_1)$ for $z_1,x_1\in \del B_1(\zeta)$ and averaged them in the numerical approximation. The actual Neumann function was numerically computed using the BEM with a very large number of boundary points. The Figure shows that we indeed achieve an error in $\OO((\Delta r)^1)$. It seems that we even achieve a higher order than only a linear one, but this is not further investigated here.

Comparing this numerical approximation with the BEM, we see that for this approximation we have have a runtime complexity of $\OO(N_f^2)\times \OO((\Delta r)^{-1})$ and an error in $\OO(\Delta r)$ multiplied to an error with respect to $N_f$, which in the above numerical experiments had no influence. For the BEM we have to invert a $N_c \times N_c$ matrix, where $N_c$ is the amount of discretisation points used on the boundary, which yields an error in $\OO(N_c^{-1})$ and has a complexity runtime of $\OO(N_c^3)$ in simple algorithms.

\subsection{Reconstructing a Matrix}\label{sec:NumImpl:FindingA}
In this section we use the approximation shown in the last section to determine a specific scattering matrix $\mathbf{S}$, as it is elaborated in Section \ref{sec:Prelim}. Different than in Equation (\ref{equ:S = N(A-I)}) we search here for a matrix $\mathbf{S}$, which is as close as possible in average value to all entries to a predetermined Matrix, which we call in this subsection matrix $\mathbf{A}\in\CC^{N\times N}$. Thus we try to minimise the value $e(\mathbf{S}) = \frac{1}{N^2}\sum_{i, j}|A_{i,j}-S_{i,j}|$. 

The procedure to form such a matrix $\mathbf{S}$ is as follows. We have $N$ source points $(z_i)_{i=1}^N$ equidistantly distributed in $(0,1)\times \{0\}$. When there are no scattering objects placed in $\RR^2_+$, then the Neumann function $\NOk$ is simply the $\Gamma^k$ function, and hence $S_{i,j} = \del_{(z_j)_2} \NOk(z_i,z_j)= 0$. Next we place a small ball within $\RR^2_+$, where we place the center so that the error $e(\mathbf{S})$ is minimised, which we in turn calculate using Theorem \ref{thm:NOrk-Asymptotics}. We did this minimization classically using a grid of points, but can in general be realized with more sophisticated methods as for example with the gradient descend method. Given the initial ball, we increase its radius using Theorem \ref{thm:RmrApprox} as it is shown in the previous section. After every increase we compute the Neumann function at the source points using the associated integral formulation, that is, 
\begin{align}
	\NOrk(z_i,z_j) 
		&= \NOk(z_i,z_j) - \int_{\del B_r(\zeta)}\del_{\nu_y} \NOk(z_j,y)\NOrk(z_i,y)\intd\sigma_y\,,\label{equ:NumImpl:1}\\
	\NOrk(z_i,y) 
		&= \NOk(z_i,y) - \int_{\del B_r(\zeta)}\del_{\nu_w} \NOk(z_j,w)\NOrk(w,y)\intd\sigma_w\,.\label{equ:NumImpl:2}
\end{align}
Thus we can compute $e(\mathbf{S})$, with the objective to see whether the error decreases or increases and whether we should increase the radius further or not. As soon as an increase in the radius does not yield a lower error, we search for a place to add another small ball. We again use Theorem \ref{thm:NOrk-Asymptotics} to determine the next best place to center the ball. In addition, we need to calculate $\NOk(\zeta_i,\zeta_j), \nabla \NOk(\zeta_i,\zeta_j) , (\nabla_{\zeta_i}\nabla_{\zeta_j}^\mathrm{T}) \, \NOk(\zeta_i,\zeta_j)$ in order to apply Theorem \ref{thm:RmrApprox}, where $\zeta_i, \zeta_j$ are values in $\RR^2_+\setminus \Omr$ and where $(\nabla_{\zeta_i}\nabla_{\zeta_j}^\mathrm{T})$ denotes a $2 \times 2 $ matrix in which the entries are the respective coordinate differentiations. To this end, we use the integral formulation above, in which we can interchange integration and differentiation. In practice, we used a linear interpolation to speed up the calculation. After we established a new place for the small ball, we can also increase it until the error $e(\mathbf{S})$ does not decrease any further. And then we search for a place for a third ball, and then a fourth and so forth until we cannot decrease $e(\mathbf{S})$ any further. This algorithm is explicit and does not use the inversion of any matrix as it is commonly done using a BEM. 

\begin{figure}[h]
  \centering
  \includegraphics[width=0.99\textwidth]{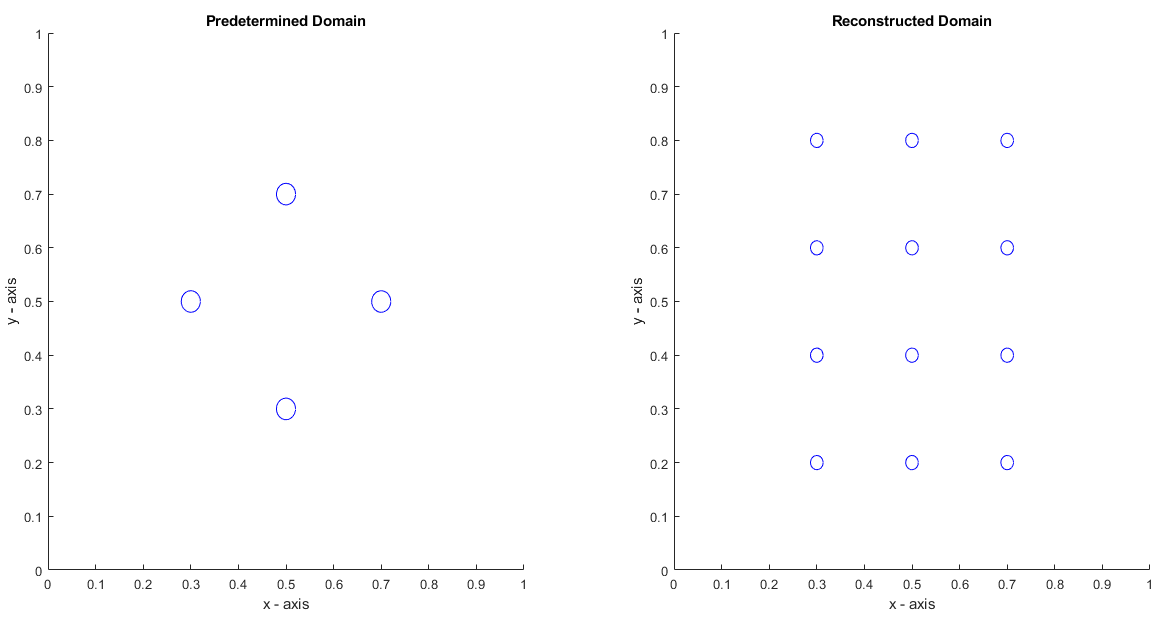}
  \caption{On the left side  a domain which yields a specific scattering matrix is given. On the right we have the domain given by the algorithm. In Figure \ref{fig:AlgoPredHeatMap} we can examine both scattering matrices. The predetermined domain is build of 4 circles of radius $0.02$ with center $[0.5, 0.3]^\mathrm{T}$, $[0.7, 0.5]^\mathrm{T}$, $[0.5, 0.7]^\mathrm{T}$, $[0.3, 0.5]^\mathrm{T}$.}\label{fig:AlgoPredDom}
\end{figure}

For our first numerical experiment, we set our predetermined matrix $\mathbf{A}$ to be the scattering matrix of a predetermined domain, which is given on the left-hand side in Figure \ref{fig:AlgoPredDom}. Using the algorithm described above, we obtain the domain on the right-hand side. On the left-hand side in Figure \ref{fig:AlgoPredHeatMap} we see a heat-map of the real part of the matrix $\mathbf{A}$ and on the right-hand side we see a heat-map of the real part of the scattering matrix $\mathbf{S}$.

\begin{figure}[h]
  \centering
  \includegraphics[width=0.99\textwidth]{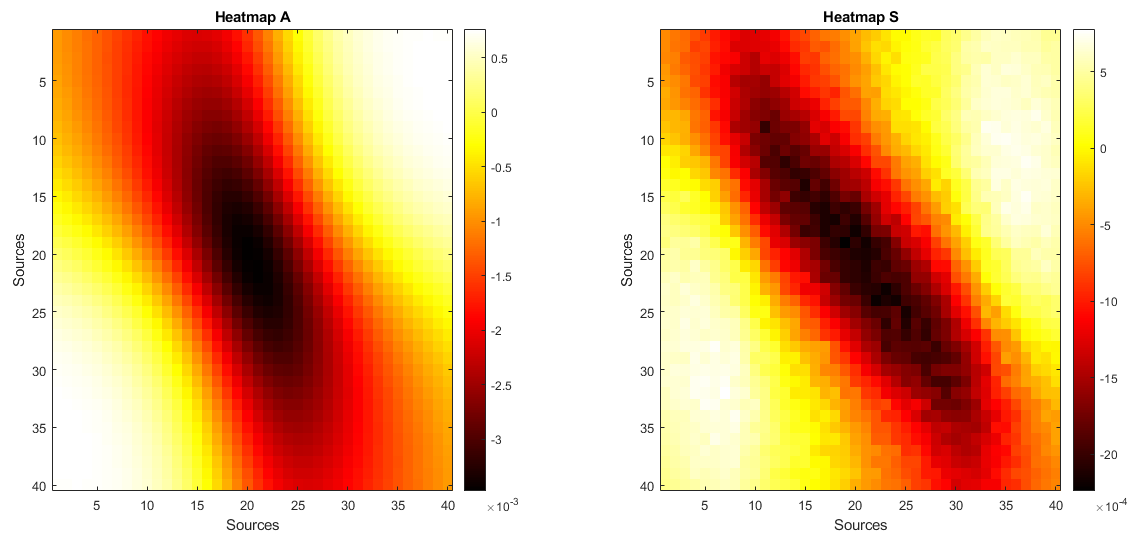}
  \caption{Here we see the real part of the corresponding scattering matrices to Figure \ref{fig:AlgoPredDom}. On the right-hand side we have the approximated one given by the algorithm. The sources are enumerate from $1$ to $N$, where the first corresponds to the leftmost source on the x-axis.}\label{fig:AlgoPredHeatMap}
\end{figure}

For more general matrices $\mathbf{A}$ we need more sources than given by the size of $\mathbf{A}$. To have such a more extended matrix we have to cast $\mathbf{A}$ to a integral of the form $\int_{\Lambda}\mathrm{K}(z_i,y)u(y)\intd\sigma_y$, and finally discretize that integral, and then apply the algorithm to the discretization.

\section{Concluding Remarks}\label{sec:ConcludingRemarks}
We considered the physical experiment presented in \cite{EnghetaPaper}, in which scattering objects were placed in front of signal sources. Those sources send waves which reflect at the object and then receiving points collect the wave intensity. The registered intensity is the solution to a predetermined linear system of equations. Hence, instead of solving the linear system with mathematical means, we can solve it using a physical set-up, which is substantially faster. The complication arises in finding the exact configuration of the scattering objects.

Using a mathematical model for the underlying physical problem we were able to describe the PDE using the Neumann function. Studying its asymptotic behaviour when we place tiny scattering objects and when we increase the extent of those objects successively, we were able to develop an explicit algorithm to place and enlarge objects such that the scattering matrix approaches the predetermined matrix, which is needed to solve the linear system of equation. In Section \ref{sec:NumImplTest} we showed that the numerical implementation for calculating the Neumann function when we enlarge an object works better then intended, in regard of the explicit Euler scheme. With such an algorithm we have a new and faster numerical method to calculate the Neumann function than using the ordinary BEM. We then applied that process to approach a desired matrix.

In this paper we considered circular scattering objects. It would be interesting to have more complicated domains such as ellipses, which would allow for one more easily accessible degree of freedom to control the waves. We think that the mathematical proofs in Section \ref{Sec:AsymptoticFormula} can be readily extended to more complicated $\cC^2$-boundaries. To this end, we need to consider a function $\phi: (-\pi,\pi)\rightarrow \RR^2$, which described the boundary, and consider it in the integration formulae. 

In the last section we mentioned that reconstructing a more general matrix $\mathbf{A}$ in a linear system of equation does not work well. We need more options in our algorithm, or a bigger matrix, which has similar properties to $\mathbf{A}$, and additionally can be described as a kernel of an integration operator. In \cite{EnghetaPaper}, the authors set the matrix to be the lower left quadrant of their scattering matrix. 

We are looking forward to see these asymptotic formulae being used in other physical problems concerning scattering problems. We are also very curious to see improvements in the object reconstruction of general linear systems and hope that our research will lead to an improvement of mathematical and technological tools for numerical computing.

\appendix
\section{An Integral Identity}\label{app:A}
In this appendix we derive the following identity:
{\small 
\begin{align*}
	- r \int_{-\pi}^\pi   
		\frac{-1}{2 \pi}\frac{2 R r-(R^2+r^2)\cos(t_x-t)}{(R^2+r^2-2 R r \cos(t_x-t))^2} \, 
		&\frac{1}{2 \pi}\log\Big(\frac{R^2+r^2}{2\,R\,r}-\cos(t_z-t)\Big)\intd t\\
	=&\,\frac{r^2}{2\pi\, R}\frac{R^2\cos(t_z-t_x)-r^2}{R^4+r^4-2 R^2 r^2\cos(t_z-t_x)}\,.
\end{align*}
}
Using the $2\pi$ periodicity, we can rewrite the left-hand side in the above identity as 
{\small 
\begin{align*}
	- r \int_{-\pi}^\pi   
		\frac{-1}{2 \pi}\frac{2 R r-(R^2+r^2)\cos(t-\tau)}{(R^2+r^2-2 R r \cos(t-\tau))^2} \, 
		\frac{1}{2 \pi}\log\Big(\frac{R^2+r^2}{2\,R\,r}-\cos(t)\Big)\intd t\,,
\end{align*}
}
where $\tau\DEF t_x-t_z$. Then we use the Fourier series
{\small 
\begin{align*}
	\frac{1}{2 \pi}\log\Big(\frac{R^2+r^2}{2\,R\,r}-\cos(t)\Big)
		= \frac{1}{2 \pi}\log\Big(\frac{R}{2\,r}\Big)
		- \frac{1}{\pi}\sum_{n=1}^\infty\frac{(\nicefrac{r}{R})^n}{n}\cos(n\,t)\,,
\end{align*}
}
and subsequently the following identity
{\small 
	\begin{align*}
		\int_{-\pi}^\pi  \frac{-1}{2 \pi}\frac{2 R r-(R^2+r^2)\cos(t_x\!-\!t)}{(R^2+r^2-2 R r \cos(t_x\!-\!t))^2} \cos(n (t_z\!-\! t))\intd t=\frac{n}{2 R r}\Big(\frac{r}{R}\Big)^n\!\!\cos(n(t_z\!-\! t_x))\,,
	\end{align*}}
for all $n\in \NN_0$, to obtain that
{\small 
\begin{align*}
	\frac{1}{2 \pi\, R }\sum_{n=1}^\infty  
		\big(\tfrac{r}{R}\big)^{2n}\cos(n\tau)\,.
\end{align*}
}
This infinite sum is the Fourier sum of
{\small 
\begin{align*}
	\frac{1}{2 \pi\, R }\frac{r^2 (R^2\cos(\tau)-r^2)}{R^4+r^4-2 R^2r^2\cos(\tau)}\,,
\end{align*}
}
which is the desired term.

\section{Modification to the Trapezoidal Rule}

In Section \ref{sec:Prelim}, we need to calculate the integral
$
	\int_{\Lambda} u_j(x)\,\del_{x_2} \NOk(z_i,x) \intd\sigma_x \,,
$
using the trapezoidal rule. But the function $u_j(x)$, where $x=[x_1, x_2]^\mathrm{T}$, $x_1\in (0,1)$, $x_2 = 0$, is not well defined for $x = z_i$. It has a logarithmic singularity around $z_i$. To use the trapezoidal rule, we need to modify it slightly. Let us be more general and consider an integral of the form
\begin{align*}
	\int_0^1 \log(|t-t_\ast |)\,f(t)\intd t \,,
\end{align*}
where $t_\ast\in (0,1)$ and $f:[0,1]\rightarrow\CC$ is a twice continuously differentiable function. Assume we have $N$ strictly increasing grid points $t_1=0,\ldots,t_N=1$, where $t_m =t_\ast$. We define $\Delta_i = t_{i+1}-t_i$. Then we have that
{\small
\begin{align*}
	\int_{t_m}^{t_{m+1}}&  \log (|t-t_\ast |)\,f(t)\intd t  \\
		&= [f(t)\,\big((t-t_\ast)\log(t-t_\ast)-t\big)]_{t=t_m}^{t_{m+1}} - \int_{t_m}^{t_{m+1}}\!\!\! f'(t)\,\big((t-t_\ast)\log(t-t_\ast)-t\big)\intd t\,,\\
		&= \tfrac{1}{2}f(t_{m+1})\Delta_m (\log(\Delta_m)-2)+\tfrac{1}{2}f(t_{m+1})\log(\Delta_m)\Delta_m +\OO((\Delta_m)^2\log(\Delta_m))\,,
\end{align*}}\noindent
where we used partial integration in the first equation, and in the second one that $f(t_{m+1})=f(t_{m})+\OO(\Delta_m)$ and $f'(t)=(f(t_{m+1})-f(t_{m}))/\Delta_m +\OO(\Delta_m)$. Similarly, we have that
{\small
\begin{align*}
	\int_{t_{m-1}}^{t_m}  \log (|t-t_\ast|)\,f(t)&\intd t  
		= \tfrac{1}{2}f(t_{m-1})\Delta_{m-1} (\log(\Delta_{m-1})-2)\\
		&+\tfrac{1}{2}f(t_{m-1})\log(\Delta_{m-1})\Delta_{m-1} +\OO((\Delta_{m-1})^2\log(\Delta_{m-1}))\,.
\end{align*}}\noindent
Now we define $(f_i)_{i=1}^N=(f_i(t_i))_{i=1}^N$ and $(l_i)_{i=1}^N=(\log(|t_i-t_\ast|)_{i=1, i\neq m}^N$, with $l_m = (\log(\Delta_m)+\log(\Delta_{m-1})-4)/2$. We have then
{\small
\begin{align*}
	\int_0^1 \log(|t-t_\ast |)\,f(t)\intd t = \tfrac{1}{2}f_1\,l_1\,\Delta_1+\sum_{i=2}^{N-1} f_i\,l_i \, \Delta_i+\tfrac{1}{2}f_N\,l_N\,\Delta_N+\OO(\max_{i=1,\ldots,N} \!\!\Delta_{i}|\log(\Delta_{i})|)\,.
\end{align*}}\noindent

\bibliographystyle{plain}
\bibliography{paperInverseMetaStruct}

\newcommand{\noop}[1]{} \def\cprime{$'$}
\begin{thebibliography}{10}

\bibitem{oursteklovpaper}
H.~{Ammari}, K.~{Imeri}, and N.~{Nigam}.
\newblock {Optimization of Steklov-Neumann eigenvalues}.
\newblock {\em J. Compt. Phys.}, 406:109211, 2020.

\bibitem{ourzarembapaper}
Habib Ammari, Oscar Bruno, Kthim Imeri, and Nilima Nigam.
\newblock Wave enhancement through optimization of boundary conditions.
\newblock {\em SIAM Journal on Scientific Computing}, 42(1):B207--B224, 2020.

\bibitem{ourGradSurfpaper}
Habib Ammari and Kthim Imeri.
\newblock A mathematical and numerical framework for gradient meta-surfaces
  built upon periodically repeating arrays of helmholtz resonators.
\newblock {\em Wave Motion}, 97:102614, 2020.

\bibitem{FWMSP1}
Habib Ammari, Kthim Imeri, and Wei Wu.
\newblock A mathematical framework for tunable metasurfaces. {P}art {I}.
\newblock {\em Asymptot. Anal.}, 114(3-4):129--179, 2019.

\bibitem{FWMSP2}
Habib Ammari, Kthim Imeri, and Wei Wu.
\newblock A mathematical framework for tunable metasurfaces. {P}art {II}.
\newblock {\em Asymptot. Anal.}, 114(3-4):181--209, 2019.

\bibitem{OutSideNeumannExistence}
P.~A. Krutitskii.
\newblock The neumann problem for the 2-d helmholtz equation in a domain,
  bounded by closed and open curves.
\newblock {\em International Journal of Mathematics and Mathematical Sciences},
  21, 1998.

\bibitem{HRSuperLens}
{Lan Jun}, {Li Yifeng}, {Xu Yue}, and {Liu Xiaozhou}.
\newblock Manipulation of acoustic wavefront by gradient metasurface based on
  helmholtz resonators.
\newblock {\em Scientific Reports}, 7(1):10587, 2017.

\bibitem{Lin298}
Dianmin Lin, Pengyu Fan, Erez Hasman, and Mark~L. Brongersma.
\newblock Dielectric gradient metasurface optical elements.
\newblock {\em Science}, 345(6194):298--302, 2014.

\bibitem{EnghetaPaper}
Nasim Mohammadi~Estakhri, Brian Edwards, and Nader Engheta.
\newblock Inverse-designed metastructures that solve equations.
\newblock {\em Science}, 363(6433):1333--1338, 2019.

\bibitem{Muskhelishvili}
N.I. Muskhelishvili.
\newblock {\em Singular Integral Equations: Boundary Problems of Function
  Theory and Their Application to Mathematical Physics}.
\newblock Dover Books on Mathematics. Dover Publications, 2013.

\end{thebibliography}

\end{document}